\documentclass[10pt, a4paper]{amsart}
\usepackage{amssymb, amsmath, amsfonts, verbatim}
\usepackage{color}
\pagestyle{plain} \frenchspacing
\newtheorem{lemma}[equation]{Lemma}
\newtheorem{theorem}[equation]{Theorem}
\newenvironment{proofofT}[1]{\normalsize {\sc Proof of #1}:}{{\hfill $\Box$}}
\newtheorem{definition}[equation]{Definition}
\newtheorem{cor}[equation]{Corollary}
\theoremstyle{remark}
\newtheorem{remark}[equation]{Remark}
\newtheorem{question}[equation]{Question}
\newtheorem{example}[equation]{Example}
\newtheorem{conjecture}[equation]{Conjecture}

\title{A Note on Beauville $p$-Groups}
\date{8th November 2011. Updated version: 14th February 2012}
\keywords{Beauville structure, Beauville group, p-groups, Beauville surface}
\subjclass[2000]{14J29;20D15;20E34;30F10}

\author{Nathan Barker}
\address{Nathan Barker, School of Mathematics and Statistics, Newcastle University, Newcastle upon Tyne, NE1 7RU, United Kingdom}
\email{nathan.barker@ncl.ac.uk}

\author{Nigel Boston}
\address{Nigel Boston, Department of Mathematics, 303 Van Vleck Hall, 480 Lincoln Drive, Madison, WI 53706, USA}
\email{boston@math.wisc.edu}

\author{Ben Fairbairn}
\address{Ben Fairbairn, Department of Economics, Mathematics and Statistics, Birkbeck, University of London, Malet Street, London WC1E 7HX, United Kingdom}
\email{bfairbairn@ems.bbk.ac.uk}

\begin{document}
\maketitle

\begin{abstract}
We examine which $p$-groups of order $\le p^6$ are Beauville. We completely classify them for groups of order $\le p^4$. We also show that the proportion of $2$-generated groups of order $p^5$ which are Beauville tends to $1$ as $p$ tends to infinity; this is not true, however, for groups of order $p^6$. For each prime $p$ we determine the smallest non-abelian Beauville $p$-group.
\end{abstract}

\section{Introduction}

Let $G$ be a finite group. We call $G$ a \emph{Beauville group} if there exists a `Beauville structure' for $G$, which we define as follows.

\begin{definition}\label{maindef}
Let $G$ be a finite group. Let $x,y\in G$ and
$$\Sigma(x,y):=\bigcup_{i=1}^{|G|}\bigcup_{g\in G}\{(x^i)^g,(y^i)^g,((xy)^i)^g\}.$$

A \emph{Beauville structure} for $G$ is a pair of generating sets $\langle x_1,y_1\rangle=\langle x_2,y_2\rangle=G$ such that
$$\Sigma(x_1,y_1)\cap\Sigma(x_2,y_2)=\{e\}.$$
\end{definition}

Traditionally, authors have stated the above structure in terms of \emph{spherical systems of generators} of length $3$, meaning  $\{x,y,z\}$ with $xyz=e$, but we omit $z=(xy)^{-1}$ from our notation in this note. The structure above is often called an \emph{unmixed} Beauville structure; we do not, however, consider the mixed structures here. Furthermore, many earlier papers on Beauville structures add the condition that for $i=1,2$ we have $o(x_i)^{-1}+o(y_i)^{-1}+o(x_iy_i)^{-1}<1$, but this condition was subsequently found to be unnecessary \cite{BCG}.

Beauville groups were originally introduced in connection with a class of complex surfaces of general type, known as Beauville surfaces. These surfaces possess many useful geometric properties; their automorphism groups \cite{Jonesauts} and fundamental groups \cite{Catnese} are relatively easy to compute and are rigid surfaces in the sense of admitting no non-trivial deformations \cite{BCG2} and thus correspond to isolated points in the moduli space of surfaces of general type. 

In \cite[Question 7.7]{BCG2} Bauer, Catanese and Grunewald asked which groups are Beauville groups. In \cite{Catnese} Catanese classified the abelian Beauville groups by proving the following. We write $C_n$ for the cyclic group of order $n$.

\begin{theorem}[Catanese 2000]\label{Cat}
Let $G$ be an abelian Beauville group. Then $G=C_n\times C_n$ where $gcd(n,6)=1$.
\end{theorem}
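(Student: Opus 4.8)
The plan is to cut the problem down to $p$-groups, one prime at a time, and then to translate the Beauville condition for $(C_{p^a})^2$ into an elementary counting statement. Since a Beauville structure in particular exhibits $G$ as generated by two elements, $G$ is a $2$-generated abelian group. First I would check that being Beauville passes to Sylow subgroups: if $\pi_p\colon G\to G_p$ is the projection onto the Sylow $p$-subgroup (which is characteristic in $G$), then a Beauville structure $\{x_1,y_1\},\{x_2,y_2\}$ of $G$ yields the structure $\{\pi_p(x_1),\pi_p(y_1)\},\{\pi_p(x_2),\pi_p(y_2)\}$ of $G_p$. The points that make this work are that in an abelian group $\Sigma(x,y)=\langle x\rangle\cup\langle y\rangle\cup\langle xy\rangle$, that the $p$-part of any power of $x_i$ is again a power of $x_i$ (so it lies in $\Sigma(x_i,y_i)$), and that this $p$-part is recovered from $\pi_p$; hence a nontrivial common element of the two $\Sigma$-sets in $G_p$ would lift to a nontrivial common element of the two $\Sigma$-sets in $G$. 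It is therefore enough to prove the theorem for $p$-groups, i.e. to show: a nontrivial Beauville $p$-group $G=C_{p^a}\times C_{p^b}$ with $0\le a\le b$ must have $a=b$ and $p\ge5$. Reassembling the Sylow factors then yields $G\cong C_n\times C_n$ with $\gcd(n,6)=1$, since $\prod_p(C_{p^{a_p}})^2\cong(\prod_pC_{p^{a_p}})^2$ and every prime involved is $\ge5$.

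Next I would prove $a=b$. Suppose $a<b$. The exponent of $C_{p^a}\times C_{p^b}$ is $p^b$, so each generating pair contains an element $z$ of order $p^b$ (the two element orders are powers of $p$ with least common multiple $p^b$). For any such $z$ one checks $\langle z\rangle\supseteq p^aG$, a characteristic subgroup of order $p^{b-a}>1$; this subgroup then lies in both $\Sigma(x_1,y_1)$ and $\Sigma(x_2,y_2)$, contradicting the Beauville condition. Hence $a=b\ge1$ and $G\cong(C_{p^a})^2$.

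The heart of the matter is the case $G=(C_{p^a})^2$; write $G$ additively as $(\mathbb{Z}/p^a)^2$. A generating pair is then exactly a $\mathbb{Z}/p^a$-module basis $\{x,y\}$ of $G$ (two elements generate iff their images span $G/pG\cong(\mathbb{Z}/p)^2$, iff the matrix with columns $x,y$ is invertible over $\mathbb{Z}/p^a$), and the three elements $x$, $y$, $x+y$ of the associated triple all have order $p^a$. Put $\Omega=\{g\in G:pg=0\}\cong(\mathbb{Z}/p)^2$. Every cyclic subgroup of $G$ of order $p^a$ meets $\Omega$ in its unique subgroup of order $p$, and two such subgroups $C$ and $D$ intersect trivially if and only if $C\cap\Omega\ne D\cap\Omega$ (a nontrivial common element would generate the shared order-$p$ subgroup). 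Since $\{x,y\}$ is a basis, $\langle x\rangle\cap\Omega$, $\langle y\rangle\cap\Omega$ and $\langle x+y\rangle\cap\Omega$ are the distinct lines spanned by $p^{a-1}x$, $p^{a-1}y$ and $p^{a-1}x+p^{a-1}y$. Hence a Beauville structure forces the three order-$p$ subgroups of $\Omega$ coming from $\{x_1,y_1\}$ to be distinct from the three coming from $\{x_2,y_2\}$, giving six distinct subgroups of order $p$ inside $\Omega\cong(\mathbb{Z}/p)^2$. But that group has only $p+1$ subgroups of order $p$, so $p+1\ge6$, i.e. $p\ge5$, which completes the proof.

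I expect the structure-theoretic bookkeeping to be routine. The steps needing genuine care are (i) the reduction to $p$-groups, where one must verify that $\Sigma$ really does behave well under the projection $\pi_p$, and, above all, (ii) the reformulation in $(C_{p^a})^2$ of the condition $\Sigma(x_1,y_1)\cap\Sigma(x_2,y_2)=\{e\}$ as disjointness of two triples of lines in $\Omega$: this is the step that turns the theorem into the bound $p+1\ge6$, and it rests on the facts that a generating pair is a module basis and that a cyclic subgroup of order $p^a$ is determined, as regards having trivial or nontrivial intersection, by its unique subgroup of order $p$.
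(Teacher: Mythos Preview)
The paper does not actually prove this theorem: it is quoted as a result of Catanese (reference~\cite{Catnese}) and used as background, with no argument supplied. So there is no ``paper's own proof'' to compare against.

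That said, your proposal is a correct and standard route to the result. The three reductions are sound: (i) the projection $\pi_p$ onto a Sylow factor sends a Beauville structure to a Beauville structure because in an abelian group $\Sigma(x,y)=\langle x\rangle\cup\langle y\rangle\cup\langle xy\rangle$ and the $p$-part of any element of $\langle x\rangle$ is again in $\langle x\rangle$; (ii) for $C_{p^a}\times C_{p^b}$ with $a<b$ the characteristic subgroup $p^aG$ is nontrivial and lies in every cyclic subgroup of maximal order, hence in both $\Sigma$-sets; (iii) in $(C_{p^a})^2$ the Beauville condition for the six order-$p^a$ cyclic subgroups is governed entirely by their unique order-$p$ subgroups, giving six distinct lines in the socle $\Omega\cong(\mathbb{Z}/p)^2$ and hence $p+1\ge 6$. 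Your identification of (ii) and especially (iii) as the steps needing care is accurate; once those are written out cleanly the argument is complete. Note that the theorem as stated is only the forward implication, so you are not obliged to exhibit a Beauville structure on $C_n\times C_n$ for $\gcd(n,6)=1$, though that converse is also true and is what makes the result a genuine classification.
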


After abelian groups, the next most natural class to consider are the nilpotent groups. The following (and its converse) is an easy exercise for the reader.

\begin{lemma}
Let $G$ and $G'$ be Beauville groups and let $\{\{x_1,y_1\},\{x_2,y_2\}\}$ and $\{\{x'_1,y'_1\},\{x'_2,y'_2\}\}$ be their respective Beauville structures. Suppose that for $i=1,2$
$$gcd(o(x_i),o(x_i'))=gcd(o(y_i),o(y_i'))=1.$$
Then $\{\{(x_1,x'_1),(y_1,y'_1)\},\{(x_2,x'_2),(y_2,y'_2)\}\}$ is a Beauville structure for the group $G\times G'$.
\end{lemma}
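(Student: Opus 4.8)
The plan is to verify directly the two requirements of Definition \ref{maindef} for the proposed pair of generating sets of $G\times G'$: first that $\langle(x_i,x_i'),(y_i,y_i')\rangle=G\times G'$ for $i=1,2$, and second that the two associated sets $\Sigma$ meet only in the identity. Throughout I will use the fact that both powering and conjugation in a direct product act coordinatewise, i.e. $((a,a')^i)^{(g,g')}=((a^i)^g,(a'^i)^{g'})$.

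For the generation statement, fix $i$ and set $H=\langle(x_i,x_i'),(y_i,y_i')\rangle$. Writing $m=o(x_i')$, we have $(x_i,x_i')^m=(x_i^m,e)\in H$; since $\gcd(o(x_i),o(x_i'))=1$ the element $x_i^m$ still generates $\langle x_i\rangle$, so $(x_i,e)\in H$, and symmetrically $(e,x_i')\in H$. The same argument applied to $(y_i,y_i')$ gives $(y_i,e),(e,y_i')\in H$. Hence $H$ contains $\langle x_i,y_i\rangle\times\{e\}=G\times\{e\}$ and $\{e\}\times\langle x_i',y_i'\rangle=\{e\}\times G'$, so $H=G\times G'$ as required. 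This is the only place the coprimality hypothesis is used.

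For the intersection condition, I would first observe that the coordinatewise behaviour above yields the containment
$$\Sigma\big((x_i,x_i'),(y_i,y_i')\big)\subseteq\Sigma(x_i,y_i)\times\Sigma(x_i',y_i'),$$
since the first coordinate of any conjugate of a power of $(x_i,x_i')$, of $(y_i,y_i')$, or of their product $(x_iy_i,x_i'y_i')$ is a conjugate of the corresponding power of $x_i$, $y_i$, or $x_iy_i$, and likewise for the second coordinate (reducing exponents modulo the relevant element orders handles the fact that the index ranges only up to $|G|$ in $\Sigma(x_i,y_i)$ but up to $|G\times G'|$ in the ambient set, and note that $e$ lies in every such $\Sigma$). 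Now suppose $(a,a')\in\Sigma((x_1,x_1'),(y_1,y_1'))\cap\Sigma((x_2,x_2'),(y_2,y_2'))$. Then $a\in\Sigma(x_1,y_1)\cap\Sigma(x_2,y_2)=\{e\}$ and $a'\in\Sigma(x_1',y_1')\cap\Sigma(x_2',y_2')=\{e\}$, so $(a,a')=e$. Combined with the generation statement, this shows the given pair is a Beauville structure for $G\times G'$.

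Since the argument is essentially bookkeeping, there is no serious obstacle; the only points demanding slight care are the coprimality trick used to recover each full cyclic factor inside $H$, and the exponent reduction needed to make the $\Sigma$-containment literally correct given the differing index ranges. The converse alluded to in the statement would be obtained by running the $\Sigma$-containment argument in reverse, noting that a Beauville structure on $G\times G'$ projects coordinatewise to one on each factor.
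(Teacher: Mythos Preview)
The paper does not give its own proof of this lemma; it states that the result (and its converse) is an easy exercise for the reader. Your argument is correct: the coprimality hypothesis is used exactly where it is needed, to recover the individual generators inside $H$, and the containment $\Sigma((x_i,x_i'),(y_i,y_i'))\subseteq\Sigma(x_i,y_i)\times\Sigma(x_i',y_i')$ together with the two given Beauville conditions forces the intersection to be trivial.
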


Recalling that a finite group is nilpotent if and only if it is a direct product of its Sylow subgroups, the above lemma reduces the study of nilpotent Beauville groups to the study of Beauville $p$-groups, which is the case we focus on here. Notice that Theorem \ref{Cat} gives us an infinite supply of Beauville $p$-groups for every $p\geq5$ - simply let $n$ be a power of $p$. Various examples of non-abelian Beauville $p$-groups for specific values of $p$ have appeared elsewhere in the literature \cite{Barker,BarkerOther,BCG3,FGJ}, but little has been said about the general case.

In several places we shall refer to computer calculations that can easily be performed in Magma \cite{magma} or GAP \cite{GAP4}. In particular we will find it convenient to use the \texttt{SmallGroup(m,n)} notation that denotes the $n^{th}$ group of order $m$ that can be found in the small groups library of MAGMA or GAP \cite{library}. 

In addition, for each group presentation $\langle X|R\rangle$, if $a,b\in X$ commute, the relation $[a,b]=e$ will be omitted for economy of space.

We now summarize the main results of this paper. In Section \ref{General} we show that there exists a non-abelian Beauville group for each order $p^n$, $p\ge 5$ $n\ge 4$.  Sections \ref{OrderP3} and \ref{OrderP4} classify the non-abelian Beauville $p$-groups of order $p^3$ and $p^4$. 

In the penultimate section, we examine the groups of order $p^5$ and prove the following theorem.

\begin{theorem}\label{familyP5}
If $p>3$, then there exist at least $p+8$ Beauville groups of order $p^5$.
\end{theorem}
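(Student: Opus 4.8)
The plan is to exhibit an explicit family of $2$-generated groups of order $p^5$ together with a concrete Beauville structure for each, and then to add a handful of further sporadic examples to reach the count $p+8$. First I would look for a one-parameter family $G_\lambda$ (indexed by $\lambda$ running over a set of size roughly $p$, e.g. $\lambda\in\mathbb{F}_p$ or a set of size $p-1$ or $p+1$) of pairwise non-isomorphic groups of order $p^5$ of nilpotency class $2$ or $3$; the natural source is the list of groups of order $p^5$ (as in the small groups library, or the classification of James), picking those whose maximal-class or class-$2$ structure involves a parameter $\lambda$ that distinguishes isomorphism types. For each such $G_\lambda$ I would write down two generating pairs $\{x_1,y_1\}$ and $\{x_2,y_2\}$ — typically obtained from the standard generators by an automorphism or by taking suitable word-products — and verify the Beauville condition $\Sigma(x_1,y_1)\cap\Sigma(x_2,y_2)=\{e\}$.

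The key computational step, carried out once for the whole family, is the verification of the conjugacy/power condition. In a $p$-group the set $\Sigma(x,y)$ is governed by the cyclic subgroups generated by $x$, $y$, $xy$ and their conjugates; since $p>3$ and the groups have order $p^5$, the relevant element orders are powers of $p$ at most $p^3$. I would reduce the condition $\Sigma(x_1,y_1)\cap\Sigma(x_2,y_2)=\{e\}$ to checking that no conjugate of a power of $x_1$, $y_1$ or $x_1y_1$ lies in any of the cyclic subgroups $\langle x_2\rangle$, $\langle y_2\rangle$, $\langle x_2y_2\rangle$ up to conjugacy — equivalently, working modulo the Frattini subgroup and then lifting, that the images of these elements in $G_\lambda/\Phi(G_\lambda)\cong C_p\times C_p$ span distinct lines, together with a check one level deeper to handle elements of order $p^2$ or $p^3$. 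This is exactly the kind of finite, prime-independent verification that goes through uniformly in $\lambda$ and $p$ once set up correctly.

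To pass from a single family to the bound $p+8$, I would then collect the additional non-abelian Beauville groups of order $p^5$ that do not belong to the main family: these come either from Theorem \ref{Cat} restricted to this order (there is none of order $p^5$ since $p^5$ is not a square, so this contributes nothing) or, more usefully, from earlier sections of the paper and from the literature \cite{Barker,BCG3,FGJ} — a fixed finite list of sporadic examples, say $8$ of them, valid for all $p>3$. Adding these to the $p$ members of the family (after checking the sporadic examples are genuinely outside the family and pairwise non-isomorphic) yields at least $p+8$ Beauville groups of order $p^5$.

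The main obstacle I anticipate is twofold: first, choosing the family so that the two generating pairs can be described by a \emph{single} uniform recipe whose Beauville property is provable without case analysis in $p$ — a poor choice of generators forces separate arguments for small residues of $p$ modulo small integers; and second, confirming that the $p$ groups in the family really are pairwise non-isomorphic and distinct from the sporadic examples, which requires invoking the classification of groups of order $p^5$ carefully (the isomorphism type often depends on $\lambda$ only up to the action of a Galois or scaling group on the parameter, so the naive count $p$ may need to be replaced by $p$ up to a bounded correction absorbed into the ``$+8$''). Once the generators are chosen well, the Beauville verification itself should be routine and is the part I would not belabour.
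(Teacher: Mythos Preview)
Your proposal is essentially the paper's approach: the paper takes the $4$-parameter family $H_{i,j,k,l}$ (with centre $\langle w,t\rangle$ and quotient the extra-special group of order $p^3$), invokes James's classification to get $p+7$ isomorphism types, discards $4$ of them (one duplicate and three where $x^p$ or $y^p$ is trivial), and verifies the uniform Beauville structure $\{\{x,y\},\{xy^2,xy^4\}\}$ on the remaining $p+3$; it then adds five further groups $H_1,H_2,H_5,H_6,H_7$, three of which are handled by lifting the order-$p^3$ Beauville structure through the centre via Lemma~\ref{lift}. Your anticipated obstacles are exactly the ones the paper addresses --- non-isomorphism is quoted from \cite{J}, and the Beauville check is done via the central quotient rather than merely the Frattini quotient --- so your split of ``$p$ from a family plus $8$ sporadics'' differs from the paper's ``$(p+3)+5$'' only in bookkeeping.
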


From the analysis of the number of $2$-generated groups of order $p^5$ we find the following consequence of the above theorem.

\begin{cor}\label{proportionP5}
The proportion of $2$-generated groups of order $p^5$ which are Beauville tends to $1$ as $p$ tends to infinity.
\end{cor}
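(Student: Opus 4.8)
\noindent\textbf{Proof strategy for Corollary~\ref{proportionP5}.} The plan is to play the lower bound of Theorem~\ref{familyP5} off against an exact asymptotic count of the denominator. Since a Beauville group is by definition generated by two elements, each of the (at least) $p+8$ Beauville groups of order $p^5$ furnished by Theorem~\ref{familyP5} is in particular a $2$-generated group of order $p^5$. Writing $B(p)$ for the number of Beauville groups of order $p^5$ and $T(p)$ for the number of $2$-generated groups of order $p^5$, we therefore have $p+8\le B(p)\le T(p)$, and the corollary will follow as soon as we establish that $T(p)=p+O(1)$ as $p\to\infty$: for then $1\ge B(p)/T(p)\ge(p+8)/(p+O(1))\to 1$.

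So the real work is the estimate $T(p)=p+O(1)$, which I would extract from the classification of groups of order $p^5$. A finite $p$-group $G$ is $2$-generated exactly when $|G/\Phi(G)|=p^2$; for $|G|=p^5$ this amounts to $|\Phi(G)|=p^3$, and for non-abelian $G$ the Frattini quotient can simply be read off a presentation. I would go through the classification (as recorded in the small groups library, or via the classical lists for order $p^5$), retaining only the groups whose Frattini quotient has rank $2$. The abelian survivors are precisely $C_{p^5}$, $C_{p^4}\times C_p$ and $C_{p^3}\times C_{p^2}$ --- a bounded number, none of them Beauville by Theorem~\ref{Cat}. Among the non-abelian groups, the families occurring in the classification are parametrised either by a bounded index set or by a single parameter ranging over $\mathbb{F}_p$, modulo equivalences that perturb the count only by bounded terms (involving quantities such as $\gcd(p-1,3)$ and $\gcd(p-1,4)$). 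Collecting the $\mathbb{F}_p$-parametrised families whose generic member is $2$-generated contributes the leading term $p$, while everything else contributes $O(1)$; hence $T(p)=p+O(1)$.

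Combining the two inputs gives $1\ge B(p)/T(p)\ge(p+8)/(p+O(1))$, and letting $p\to\infty$ proves the corollary.

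The step I expect to be the main obstacle is the bookkeeping in the second paragraph: correctly identifying which of the order-$p^5$ families are genuinely $2$-generated, and verifying that none of the $O(1)$ correction terms secretly hides an extra factor of $p$. This is mechanical once the classification is in hand, but it must be done with care, since the analogous count for order $p^6$ behaves differently --- as the abstract notes, the proportion there does \emph{not} tend to $1$ --- so it is precisely the fact that the leading coefficient of $T(p)$ is $1$, rather than $2$ or more, that makes the corollary true.
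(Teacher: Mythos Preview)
Your proposal is correct and follows essentially the same route as the paper: sandwich $B(p)/T(p)$ between $(p+8)/T(p)$ and $1$, and then use the classification of groups of order $p^5$ to show $T(p)\sim p$. The only difference is that the paper does not re-derive the count but simply quotes from \cite{J} the exact formula $h(p)=p+26+2\gcd(p-1,3)+\gcd(p-1,4)$ for the number of $2$-generated groups of order $p^5$ (for $p\ge 5$), which immediately gives the $T(p)=p+O(1)$ you need and bypasses the bookkeeping you flagged as the main obstacle.
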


For groups of order $p^6$ we find the following.

\begin{theorem}\label{familyP6}
If $p>3$, then there exist at least $p-1$ $2$-generated non-Beauville groups of order $p^6$.
\end{theorem}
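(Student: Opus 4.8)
The plan is to produce an explicit family of $p-1$ pairwise non-isomorphic $2$-generated groups of order $p^6$, indexed by a parameter $\lambda$ running over a set of size $p-1$ (e.g. $\lambda \in \mathbb{F}_p^\times$, or the nonzero residues), and to show that none of them admits a Beauville structure. A natural source is the groups of maximal class of order $p^6$, or more generally a family built on a fixed lower-central/commutator structure with one "free" structure constant $\lambda$ appearing in a defining relation such as $[\,[x,y],y\,] = z^\lambda$ or in a power relation $x^p = z^\lambda$; the key is to arrange that, in each such group $G_\lambda$, every generating pair $\{x,y\}$ (equivalently, every pair projecting to a basis of $G_\lambda/\Phi(G_\lambda) \cong C_p \times C_p$) forces the elements $x$, $y$, $xy$ to share a common nontrivial power lying in a fixed small characteristic subgroup. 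First I would pin down the presentation of $G_\lambda$, verify $|G_\lambda| = p^6$ and $d(G_\lambda)=2$ by a Jennings/lower-central-series count, and check pairwise non-isomorphism for distinct $\lambda$ — typically via an invariant like the isomorphism type of $G_\lambda/\gamma_i(G_\lambda)$ or the action on an abelian section, possibly after quotienting out the action of $\mathrm{Aut}$ by scaling $\lambda$ so that genuinely $p-1$ classes survive.

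The core of the argument is the non-Beauville part. Here I would exploit the structure theorem for $\Sigma$ in a $p$-group: since conjugates of powers of $g$ all generate the same cyclic subgroup up to conjugacy, $\Sigma(x,y) \cap \Sigma(x',y') = \{e\}$ requires that the cyclic subgroups generated by $x,y,xy$ and those generated by $x',y',x'y'$ meet trivially after conjugation. The strategy is to identify a characteristic subgroup $Z$ — most naturally a central subgroup of order $p$, e.g. $Z = \gamma_5(G_\lambda)$ or a well-chosen piece of the centre — through which every "large" cyclic subgroup must pass. Concretely, I would show that for any generating pair $\{x,y\}$ of $G_\lambda$, at least two of $x$, $y$, $xy$ have order $p^2$ or more (using that $G_\lambda/\Phi$ has exponent $p$ but $G_\lambda$ itself does not, because of the $\lambda$-twisted relation), and that the unique subgroup of order $p$ inside each such cyclic group equals $Z$. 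Then $Z \leq \langle \text{some power of } x\rangle \cap \langle \text{some power of } x'\rangle$, and since $Z$ is central hence normal, $Z \subseteq \Sigma(x_1,y_1)\cap\Sigma(x_2,y_2)$ for any two generating pairs, contradicting the Beauville condition.

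The main obstacle I anticipate is the uniform order/embedding analysis: I must rule out the possibility that some clever generating pair has all three of $x$, $y$, $xy$ of exponent $p$ (which would be possible in a group of exponent $p$, but here the $\lambda$-relation is precisely what should obstruct this), and I must show the "socle" of each order-$p^2$-or-bigger cyclic subgroup really is the one fixed central line $Z$, independent of the pair chosen. This reduces to a computation in $G_\lambda/\gamma_k$ for small $k$: compute $w^p$ for $w$ ranging over coset representatives $x^a y^b$ modulo $\Phi(G_\lambda)$ and check that the answer, when nontrivial, always lands in $Z$. I would organize this as a short lemma ("in $G_\lambda$, for every $w \notin \Phi(G_\lambda)$, either $w^p = e$ is impossible, or $w^p \in Z \setminus \{e\}$"), prove it by the commutator-collection formula (Hall--Petrescu), and then the Beauville obstruction follows formally. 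Finally I would remark that the same count shows $2$-generated groups of order $p^6$ are not "generically Beauville", contrasting Corollary \ref{proportionP5}.
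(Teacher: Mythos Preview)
Your proposal is correct and follows essentially the same strategy as the paper: exhibit an explicit one-parameter family of $p-1$ pairwise non-isomorphic $2$-generated groups of order $p^6$ and show each is non-Beauville because every generating pair contains an element of maximal order whose cyclic subgroup passes through a fixed central line $Z$, forcing $Z\setminus\{e\}\subseteq\Sigma(x_1,y_1)\cap\Sigma(x_2,y_2)$. The paper's concrete family is
\[
K_{r}=\langle x,y,z,u,v,w\mid x^p=u,\; y^p=w^r,\; z^p,\; u^p=v,\; v^p,\; w^p,\; [y,x]=z,\; [z,x]=v,\; [z,y]=w\rangle
\]
for $r=1,\dots,p-1$; here $o(x)=p^3\neq p^2=o(y)$, every element outside $\langle y,\Phi(K_r)\rangle$ powers up into $\langle v\rangle$, and the obstruction is exactly the one you describe with $Z=\langle v\rangle$. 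The only substantive difference is bookkeeping: the paper pulls the family (and hence pairwise non-isomorphism) directly from the Newman--O'Brien--Vaughan-Lee classification of groups of order $p^6$, so no separate invariant computation is needed, whereas you propose to verify non-isomorphism by hand via an invariant of a quotient. Your suggestion of maximal-class groups is not what the paper uses (the $K_r$ have class $3$), but your alternative ``fixed commutator structure with one free constant in a power relation'' is precisely the shape of $K_r$.
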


From the analysis of the number of $2$-generated groups of order $p^6$ we find the following consequence of the above theorem.

\begin{cor}\label{proportionP6}
The proportion of $2$-generated groups of order $p^6$ which are Beauville does not tend to $1$ as $p$ tends to infinity.
\end{cor}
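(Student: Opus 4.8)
The plan is to combine the lower bound of Theorem \ref{familyP6} with the known asymptotics for the number of $2$-generated groups of order $p^6$. Recall that the total number of groups of order $p^6$ is a polynomial in $p$ of degree $3$, namely $3p^2 + 39p + 344 + 24\gcd(p-1,3) + 11\gcd(p-1,4) + 2\gcd(p-1,5)$ (a classical result; see the references cited elsewhere in the paper for $p$-group enumeration). First I would extract from the literature — or derive directly from the classification of groups of order $p^6$ — the fact that the number of \emph{$2$-generated} such groups is also $cp^2 + O(p)$ for an explicit constant $c$; the leading term comes from the same families of class-$2$ and class-$3$ quotients that dominate the full count, since abelian and $3$-or-more-generated groups contribute only $O(1)$ or lower-order terms. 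Denote this count $N_2(p) = cp^2 + O(p)$.

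Next, Theorem \ref{familyP6} produces at least $p-1$ pairwise non-isomorphic $2$-generated non-Beauville groups of order $p^6$ for every $p>3$. The key observation is then purely arithmetic: the proportion of $2$-generated groups of order $p^6$ that are Beauville is at most
\[
1 - \frac{p-1}{N_2(p)} = 1 - \frac{p-1}{cp^2 + O(p)}.
\]
As $p\to\infty$ this expression tends to $1$, so a naive bound of this shape is \emph{not} enough — subtracting $\Theta(1/p)$ still leaves a limit of $1$. Consequently the real content of the corollary cannot come from Theorem \ref{familyP6} alone as stated with only $p-1$ groups; the honest route is to show that the family in Theorem \ref{familyP6} in fact accounts for a positive proportion, i.e.\ that the relevant isoclinism family (or the family of groups sharing the fixed "bad" quotient structure forcing non-Beauville-ness) has size $\Theta(p^2)$, matching the order of $N_2(p)$. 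So the step I would actually carry out is to revisit the construction behind Theorem \ref{familyP6} and count how many $2$-generated order-$p^6$ groups admit the same obstruction: typically one fixes a non-Beauville $2$-generated quotient $Q$ of order $p^5$ and notes that \emph{every} group of order $p^6$ with $G/\Phi(G)' \cong Q$ (or some similar condition) fails to be Beauville, because the images of any two generating pairs already intersect nontrivially in $Q$. Counting central extensions of such a $Q$ by $C_p$, up to isomorphism, gives $\Theta(p)$ or more groups for each of $\Theta(p)$ choices of $Q$, hence $\Theta(p^2)$ in total.

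The main obstacle, therefore, is the combinatorial bookkeeping in that last step: verifying that a non-Beauville obstruction on a quotient is inherited by a genuinely two-dimensional (in $p$) family of order-$p^6$ groups, and that these are pairwise non-isomorphic so that one is not over-counting. This requires care with the automorphism action on the relevant cohomology group $H^2(Q, C_p)$ and with which extensions remain $2$-generated. Once a subfamily of size $\alpha p^2 + O(p)$ of non-Beauville $2$-generated groups is exhibited, the conclusion is immediate: the Beauville proportion is at most $1 - (\alpha p^2 + O(p))/(c p^2 + O(p)) \to 1 - \alpha/c < 1$, so it does not tend to $1$. I would close by remarking that this also shows the contrast with Corollary \ref{proportionP5} is genuine and not an artifact of weak bounds.
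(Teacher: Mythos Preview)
Your proposal contains a concrete factual error that derails the entire argument. You assert that the number of $2$-generated groups of order $p^6$ is $cp^2 + O(p)$, reasoning by analogy with the total count $3p^2 + 39p + \cdots$. This is false: by Newman--O'Brien--Vaughan-Lee, the number of $2$-generated groups of order $p^6$ is
\[
f(p)=10p + 62 + 14\gcd(3,p-1) + 7\gcd(4,p-1) + 2\gcd(5,p-1),
\]
so $f(p)\sim 10p$, not $\Theta(p^2)$. The quadratic growth of the total count comes from groups requiring three or more generators; restricting to $2$-generated groups drops the order of magnitude to linear in $p$.

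Once this is corrected, the corollary follows immediately from Theorem~\ref{familyP6} exactly as stated, with no further work: the proportion of $2$-generated groups that are Beauville is at most
\[
1-\frac{p-1}{f(p)} \longrightarrow 1-\frac{1}{10} < 1.
\]
All of your discussion about needing a $\Theta(p^2)$ family of non-Beauville groups, inheriting obstructions through quotients, and counting central extensions via $H^2(Q,C_p)$ is unnecessary; it was prompted solely by the wrong denominator. This is precisely the paper's argument, so the corollary really is an immediate consequence of Theorem~\ref{familyP6} together with the cited enumeration of $2$-generated groups of order $p^6$.
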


From \cite{FGJ} we have the following statement \textquotedblleft it is very plausible that most $2$-generated finite $p$-groups of sufficiently large order [are Beauville groups]". If we interpret that the word \textquotedblleft most" from the statement to mean that the proportion of Beauville groups tends to $1$ as $p$ tends to infinity, then this statement is true for groups of order $p^5$ but not for groups of order $p^6$.

\begin{question}
If $n>6$, what is the behavior, as $p$ tends to infinity, of the proportion of $2$-generated groups which are Beauville? 
\end{question}

Finally, through computational experimentation, we have the corollary of the combined results of this note.

\begin{cor}
The smallest non-abelian Beauville $p$-groups are
\begin{enumerate}
\item for $p=2$, {\textnormal{\ttfamily SmallGroup}}($2^7$,36);
\item for $p=3$, the group given by Example \ref{p3}, of order $3^5$;
\item for $p=5$, {\textnormal{\ttfamily SmallGroup}}($5^3$,$3$);
\item for $p\ge 7$, the groups given by Lemma \ref{NathanLem}, of order $p^3$.
\end{enumerate}
\end{cor}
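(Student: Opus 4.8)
The plan is to assemble the statement from the classifications of Sections \ref{OrderP3} and \ref{OrderP4} together with a small number of explicit computer calculations, treating the primes $p=2$, $p=3$, $p=5$ and $p\ge 7$ separately. For each prime the argument splits into an \emph{existence} half --- producing a non-abelian Beauville group of the claimed order --- and a \emph{minimality} half --- ruling out every non-abelian group of smaller order. Since all groups of order $p$ and $p^2$ are abelian, the minimal candidate order is $p^3$ in every case, and the abelian Beauville $p$-groups (which exist only for $p\ge 5$, by Theorem \ref{Cat}) are not relevant here.

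For the existence half: when $p\ge 7$ the groups of Lemma \ref{NathanLem} are Beauville of order $p^3$, so nothing further is needed; when $p=3$ the group of Example \ref{p3} is Beauville of order $3^5$; and when $p\in\{2,5\}$ one checks directly in Magma or GAP that {\ttfamily SmallGroup}$(2^7,36)$ and {\ttfamily SmallGroup}$(5^3,3)$ carry Beauville structures. For the minimality half: for $p\ge 7$ it is immediate. For $p=5$ the classification of Section \ref{OrderP3} shows that, of the two non-abelian groups of order $5^3$, exactly {\ttfamily SmallGroup}$(5^3,3)$ is Beauville, so it is the smallest. For $p=3$, Sections \ref{OrderP3} and \ref{OrderP4} show that no non-abelian group of order $3^3$ or $3^4$ is Beauville, so order $3^5$ is minimal and is realised by the group of Example \ref{p3}. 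For $p=2$, Sections \ref{OrderP3} and \ref{OrderP4} dispose of orders $2^3$ and $2^4$, and for orders $2^5$, $2^6$ and $2^7$ one runs an exhaustive search over the small groups library, confirming that no non-abelian $2$-group of order less than $2^7$ is Beauville while {\ttfamily SmallGroup}$(2^7,36)$ is.

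I expect the $p=2$ case to be the principal obstacle. Unlike the odd cases, it cannot be reduced to the uniform classifications of the earlier sections, so one must examine every non-abelian $2$-group of order $2^3$ through $2^6$, together with the groups of order $2^7$, individually --- a genuinely exhaustive machine computation, there being, for instance, $2328$ groups of order $2^7$. A secondary point deserving care is that the general-$p$ arguments of Sections \ref{OrderP3} and \ref{OrderP4} must be verified to remain valid at $p=2$ and $p=3$, where the scarcity of cyclic subgroups of order $p$ in $G/\Phi(G)\cong C_p\times C_p$ (three when $p=2$, four when $p=3$) forces the images of the two generating triples to overlap heavily and so tends to obstruct a Beauville structure; it is exactly this small-prime behaviour that pushes the minimal order up to $3^5$ and $2^7$, and the relevant subcases should be checked rather than read off the generic statement.
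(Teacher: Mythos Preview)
Your proposal is correct and mirrors the paper's own treatment: the corollary is presented there as a consequence of ``computational experimentation'' together with the combined results of Sections \ref{OrderP3}--\ref{p5} and the remarks on order $p^6$, which is exactly the existence/minimality assembly you describe. The only small caveat is that Section \ref{OrderP3} does not explicitly treat the second non-abelian group of order $2^3$ (namely $Q_8$), so for $p=2$ you should fold order $2^3$ into the computer search rather than cite Section \ref{OrderP3} for it.
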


\section{Some general results}\label{General}
We first explicitly show that there is a non-abelian $2$-generated non-Beauville group of order $p^n$ for every $n\geq3$ and for every prime $p$.

\begin{lemma}\label{firstgen}
The group
$$G:=\langle x,y|x^{p^n},y^p,x^y=x^{p^{n-1}+1}\rangle$$
is a non-abelian 2-generated non-Beauville group of order $p^{n+1}$ for every prime $p$ and every $n>1$. 
\end{lemma}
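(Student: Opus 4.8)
The plan is to first establish the basic structure of $G$, then show it is $2$-generated and non-abelian, and finally prove it is not Beauville by identifying a normal subgroup on which the structure fails.

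First I would verify that $G$ has order $p^{n+1}$. The relation $x^y = x^{p^{n-1}+1}$ shows that conjugation by $y$ is an automorphism of $\langle x\rangle \cong C_{p^n}$ of order dividing $p$ (since $(p^{n-1}+1)^p \equiv 1 \pmod{p^n}$, as one checks by the binomial theorem — this is the routine computation I won't grind through). Hence $\langle x\rangle \trianglelefteq G$, and $G/\langle x\rangle$ is generated by the image of $y$, which has order dividing $p$; combined with $y^p = e$ and $y \notin \langle x\rangle$ (else $G$ would be cyclic, contradicting non-commutativity of the defining relations for $p^{n-1}+1 \not\equiv 1$), we get $|G| = p^n \cdot p = p^{n+1}$. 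The group is non-abelian precisely because $p^{n-1}+1 \not\equiv 1 \pmod{p^n}$ for $n > 1$, and it is $2$-generated by construction.

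The heart of the argument is showing $G$ is not Beauville, and this is the main obstacle. The key observation is that $G' = \langle [x,y]\rangle = \langle x^{p^{n-1}}\rangle$ is the unique subgroup of order $p$ in $\langle x\rangle$, and more importantly that $G/G'$ is abelian of a controlled shape. I would argue that \emph{every} generating pair $\{a,b\}$ of $G$ must have the property that one of $a$, $b$, $ab$ lies in a fixed proper subgroup, or — more precisely along the lines of the other non-Beauville arguments in this area — that the element $x^{p^{n-1}}$ (the generator of $G'$, which lies in the centre since $G'$ is central here) is conjugate into $\Sigma(a,b)$ for every generating pair. Concretely: since $\Phi(G) = G'\langle x^p, y^p\rangle = \langle x^p \rangle$ has index $p^2$, any generating pair $\{a,b\}$ maps to a basis of $G/\Phi(G) \cong C_p \times C_p$. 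One then checks that for any such pair, a suitable power of $a$ (or $b$, or $ab$) lands in $\langle x\rangle$ and in fact hits the unique central subgroup $\langle x^{p^{n-1}}\rangle$; since this holds for \emph{both} generating pairs of a putative Beauville structure, $\Sigma(x_1,y_1) \cap \Sigma(x_2,y_2)$ contains $x^{p^{n-1}} \neq e$, a contradiction.

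To make the middle step precise I would split into cases according to which of $a, b$ lies in $\langle x\rangle$: since $\{a,b\}$ generates $G$ and $G/\langle x\rangle \cong C_p$, at least one of $a$, $b$, $ab$ lies outside $\langle x\rangle$, and by relabelling (using that the three elements $x$-coset-wise are $a$-coset, $b$-coset, $(ab)$-coset) one reduces to understanding elements of the form $x^k y^j$ with $j \not\equiv 0$. Computing the order of such an element and its relevant power that falls into $\langle x\rangle$, one shows this power is always a nonzero multiple of $x^{p^{n-1}}$ up to the automorphism action — here I would use that $y$ acts trivially on $G' = \langle x^{p^{n-1}}\rangle$, so conjugation does not move it. The expected difficulty is bookkeeping the orders of mixed elements $x^ky^j$ and confirming that the power landing in $\langle x \rangle$ genuinely generates the socle $\langle x^{p^{n-1}}\rangle$ rather than being trivial; once that is nailed down, the conclusion that $x^{p^{n-1}} \in \Sigma(x_1,y_1)\cap\Sigma(x_2,y_2)$ is immediate and the lemma follows.
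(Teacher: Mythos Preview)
Your proposal is correct in spirit and arrives at the same endpoint as the paper --- namely, that the central element $x^{p^{n-1}}$ lies in $\Sigma(a,b)$ for \emph{every} generating pair $\{a,b\}$, so two such $\Sigma$-sets can never meet only in $\{e\}$ --- but the paper reaches this conclusion much more directly. Instead of passing through the Frattini quotient and case-splitting on cosets of $\langle x\rangle$, the paper simply records that $Z(G)=\langle x^p\rangle$ and that every element outside the index-$p$ subgroup $\langle x^p,y\rangle$ has order $p^n$. Since any generating set must contain an element outside this proper subgroup, that element has order $p^n$ and therefore powers up to (a generator of) $\langle x^{p^{n-1}}\rangle$; this finishes the non-Beauville claim in one line.

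Your reduction to elements $x^k y^j$ with $j\not\equiv 0$ (i.e.\ elements outside $\langle x\rangle$) is slightly off target: $y$ itself has $j\neq 0$ but does \emph{not} power into $\langle x^{p^{n-1}}\rangle$, so the ``bookkeeping'' you anticipate would genuinely have to branch again to rule out this case and fall back on $b$ or $ab$. The fix is exactly to replace $\langle x\rangle$ by $\langle x^p,y\rangle$ in your case split: outside that subgroup one has $p\nmid k$, and then (for odd $p$) the class-$2$ collection formula gives $(x^ky^j)^p=x^{kp}$, so $(x^ky^j)^{p^{n-1}}=x^{kp^{n-1}}$ is a nontrivial element of the socle. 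With that adjustment your argument collapses to the paper's.
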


\begin{proof}
Clearly $G$ is non-abelian and 2-generated and a straightforward coset enumeration shows that the subgroup $\langle x\rangle$ has index $p$ and so $|G|=p^{n+1}$. Now, $Z(G)=\langle x^p\rangle$ and every element outside the subgroup $\langle x^p,y\rangle$ has order $p^n$. Consequently, any generating set must contain at least one element of order $p^n$, but all such elements power up to $x^{p^{n-1}}$ (i.e. there exists $a\in \mathbb{N}$ such that, for $w\in G$, $w^a=x^{p^{n-1}}$), so $G$ cannot have a Beauville structure.
\end{proof}

We remark that this lemma is a generalisation of \cite[Example 4A]{FJ} which is the case $n=2$. We now show that there exists a non-abelian Beauville group $G$ of order $p^n$ for every $p\ge 5$ and $n\ge 4$.

\begin{lemma}\label{BenLemma}
The group
$$G:=\langle x,y|x^{p^n},y^{p^n},x^y=x^{p+1}\rangle$$
is a non-abelian Beauville group of order $p^{2n}$ for every prime $p\ge5$ and every $n\ge 2$.
\end{lemma}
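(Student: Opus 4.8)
The plan is to verify the three standard requirements: that $G$ has order $p^{2n}$ and is non-abelian, that $G$ is $2$-generated so that finding explicit generators suffices, and that there is an actual Beauville structure. First I would establish the order. The relation $x^y = x^{p+1}$ shows $\langle x\rangle$ is normal, with $G/\langle x\rangle$ cyclic of order dividing $p^n$; a coset enumeration (or the observation that $y$ has order $p^n$ modulo $\langle x\rangle$, since $(p+1)^{p^n}\equiv 1 \pmod{p^n}$ but $(p+1)^{p^{n-1}}\not\equiv 1$ when $n\ge 2$, $p\ge 5$) gives $|G| = p^{2n}$. Non-abeliancy is immediate since $p+1\not\equiv 1\pmod{p^n}$. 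I would also record the commutator structure: $[x,y] = x^p$, so $G' = \langle x^p\rangle$ and $G$ has nilpotency class roughly $n$; this is the data I need to control which elements generate and which powers of elements can coincide.

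Next I would exhibit the two generating pairs. The natural choice is a pair built from $\{x,y\}$ and a pair built from a ``twisted'' copy, e.g. $\{x_1,y_1\} = \{x, y\}$ and $\{x_2,y_2\} = \{xy, x^k y\}$ or $\{x y, y x^j\}$ for a suitably chosen $k$ or $j$ coprime to $p$; one checks via the Burnside basis theorem that each such pair generates $G$ (it suffices that the images span $G/\Phi(G) \cong C_p\times C_p$, which is a quick linear-algebra check on the exponents mod $p$). The element orders here are all $p^n$ or divisors thereof, and because $\langle x\rangle$ is normal with cyclic quotient, every element $w$ has a well-understood power structure: its $\langle x\rangle$-component and its image in the cyclic quotient $G/\langle x\rangle$. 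I would compute, for each of $x_1, y_1, x_1y_1, x_2, y_2, x_2y_2$, the cyclic subgroup it generates and, more to the point, the conjugacy classes of its nontrivial powers.

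The crux — and the step I expect to be the main obstacle — is showing $\Sigma(x_1,y_1)\cap\Sigma(x_2,y_2) = \{e\}$. The standard technique (as in Catanese's argument and its descendants) is to reduce mod $G' = \langle x^p \rangle$: in the abelianization $G/G' \cong C_{p^n}\times C_p$ (coordinates: image of $x$ has order $p^n$, image of $y$ has order $p$), conjugation is trivial, so $\Sigma$ descends to the union of cyclic subgroups generated by the images. If the images of the six elements generate six cyclic subgroups that pairwise (across the two triples) intersect only in the subgroup killed mod $G'$, one still has to handle the ``tail'' inside $G'$. Here one uses that any two of our elements with the same image in $G/G'$ up to the relevant power land in $\langle x^p\rangle \cong C_{p^{n-1}}$, and one must check no nontrivial element of $\langle x^p\rangle$ lies in both $\Sigma$'s — this is where the choice of the twisting exponent $k$ (or $j$) must be made carefully, and where the hypotheses $p\ge 5$ and $n\ge 2$ are used (for $p=2,3$ the abelianization $C_{p^n}\times C_p$ already fails Catanese's criterion, mirroring Theorem \ref{Cat}). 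I would organize this final step as: (i) pick generators so that the three ``types'' (component in $\langle x \rangle$ vs. projecting onto the cyclic quotient) are distributed so that an element of $\Sigma(x_1,y_1)$ with nontrivial $\langle x\rangle / \langle x^p\rangle$ part cannot be conjugate to one from $\Sigma(x_2,y_2)$; (ii) separately rule out collisions inside $\langle x^p\rangle$ by a direct order/exponent computation. Once the generators are pinned down, each of these is a finite, explicit check rather than anything deep.
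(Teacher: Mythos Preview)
Your outline has two concrete gaps that would block the argument as written.

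First, the sample second pair you suggest, $\{xy,\,x^{k}y\}$ (or $\{xy,\,yx^{j}\}$), contains $xy=x_{1}y_{1}$ itself, so $\Sigma(x_{1},y_{1})\cap\Sigma(x_{2},y_{2})\supseteq\langle xy\rangle\neq\{e\}$ before any computation begins. You need a pair whose three associated elements avoid the ``slopes'' of $x$, $y$, $xy$; the paper takes $\{xy^{2},xy^{3}\}$ for $p>5$ and $\{xy^{2},xy^{4}\}$ for $p=5$, varying the exponent of $y$ rather than of $x$.

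Second, and more structurally, your plan to separate the $\Sigma$-sets by passing to $G/G'$ does not work here. Since $[x,y]=x^{p}$ one has $G'=\langle x^{p}\rangle$, so in $G/G'$ the image of $x$ has order $p$ and the image of $y$ has order $p^{n}$ (you have these reversed), giving $G/G'\cong C_{p}\times C_{p^{n}}$. This abelian group is \emph{not} of Catanese's shape $C_{m}\times C_{m}$, and indeed for any second triple built from elements $xy^{a}$ the images in $G/G'$ already meet $\langle\bar{y}\rangle$ nontrivially: for instance $(\overline{xy^{2}})^{p^{n-1}}=\bar{y}^{\,2p^{n-1}}\in\langle\bar{y}\rangle\setminus\{1\}$. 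So the collision is not a ``tail in $G'$'' phenomenon; it already occurs in the abelianisation, and no choice of twisting exponent fixes this.

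The paper avoids this by working with the centre rather than the abelianisation. One checks $Z(G)=\langle x^{p^{n-1}},y^{p^{n-1}}\rangle\cong C_{p}\times C_{p}$, and observes that conjugate elements have equal $p^{n-1}$-th powers (these lie in $Z(G)$). An explicit geometric-series computation gives $(xy^{a})^{p^{n-1}}=x^{p^{n-1}}y^{ap^{n-1}}$, so the six elements $x,\,y,\,xy,\,xy^{2},\,xy^{3},\,x^{2}y^{5}$ land on six distinct lines in $Z(G)\cong C_{p}\times C_{p}$ once $p\ge 7$ (and a modified choice works for $p=5$). This is exactly the Catanese $C_{p}\times C_{p}$ picture, but realised inside $Z(G)$ via $p^{n-1}$-th powers rather than in a quotient. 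Replacing your $G/G'$ step with this $Z(G)$ step, and adjusting the second generating pair accordingly, would complete your outline.
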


\begin{proof}
Clearly $G$ is non-abelian and 2-generated and a straightforward coset enumeration shows that the subgroup $\langle x\rangle$ has index $p^n$ and so $|G|=p^{2n}$. Let $p>5$ be prime. We claim that $\{\{x,y\},\{xy^2,xy^3\}\}$ is a Beauville structure in this case.

Now, every element of $G$ can be written as $x^iy^j$ for some $0\leq i,j\leq p^n-1$. Furthermore, since $Z(G)=\langle x^{p^{n-1}},y^{p^{n-1}}\rangle$ and so a necessary condition for two elements of $G$ to be conjugate is that they power up to the same elements of $Z(G)$. A straightforward induction tells us that 
$$(xy)^r=x^{1+(p+1)+(p+1)^2+\cdots+(p+1)^{r-1}}y^r.$$ 
An easy exercise in using geometric progressions and the binomial theorem tells us that for any prime $p$
$$1+(1+p)+\cdots+(1+p)^{p^{n-1}-1}\equiv p^{n-1}\mbox{ (mod }p^n).$$
Combining these two identities gives $(xy)^{p^{n-1}}=x^{p^{n-1}}y^{p^{n-1}}$. Similar identities can be established for the elements $xy^2$, $xy^3$ and $(xy^2xy^3)y^{-5}y^5=x^{1+(p+1)^2}y^5$, verifying that no powers of these elements are conjugate. 

Finally we need show these pairs generate. Clearly $\langle x,y\rangle=G$ by definition. Since $(xy^2)^{-1}xy^3=y$ and $xy^2y^{-2}=x$ so $G\leq\langle x,y\rangle\leq\langle xy^2,xy^3\rangle\leq G$.

Similar calculations in the case $p=5$ show that $\{\{x,y\},\{xy^2,xy^4\}\}$ is a Beauville structure.
\end{proof}

The above lemma has covered the groups of order an even power of a prime, $p^{2n}$. The next lemma covers the odd case, $p^{2n+1}$.

\begin{lemma}\label{Oddpowers}
The group
\[G:=\langle x,y,z,\alpha_{1},...,\alpha_{n-1},\beta_{1},...,\beta_{n-1}|x^{p^n},y^{p^n},z^p,[x,y]=z,\]
\[\alpha_i=x^{p^i},\beta_i=y^{p^i} \ \text{(for all $1 \le i\le n-1$)} \ \rangle,\] 
is a non-abelian Beauville group of order $p^{2n+1}$ for $p\ge 5$ and $n\ge 2$.
\end{lemma}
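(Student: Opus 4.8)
The plan is to mimic the proof of Lemma \ref{BenLemma} as closely as possible. First I would verify the basic structural facts: a coset enumeration (or a direct count using the generators $\alpha_i,\beta_i$ which are redundant since $\alpha_i=x^{p^i}$, $\beta_i=y^{p^i}$ are introduced only for presentational convenience) shows $|G|=p^{2n+1}$, that $G$ is non-abelian and $2$-generated by $x,y$, and that $G$ has the structure of a central extension $1\to\langle z\rangle\to G\to C_{p^n}\times C_{p^n}\to 1$ with $[x,y]=z$ of order $p$. I would then identify $Z(G)=\langle x^{p^{n-1}},y^{p^{n-1}},z\rangle$ and record the key commutator identity: since $z$ is central of order $p$, we have the usual "almost-abelian" collection formula $(x^ay^b)(x^cy^d)=x^{a+c}y^{b+d}z^{bc}$, from which one computes powers such as $(x^iy^j)^r=x^{ir}y^{jr}z^{j i\binom{r}{2}}$ (valid because $z$ is central of order $p$; the binomial coefficient is taken mod $p$).

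The heart of the argument is then to exhibit two generating pairs whose $\Sigma$-sets meet only in $e$. Following the even case, I would try $\{\{x,y\},\{xy^2,xy^3\}\}$ (with the variant $\{xy^2,xy^4\}$ when $p=5$). Generation is immediate exactly as before: $(xy^2)^{-1}(xy^3)=y$ up to a central factor of order $p$ that lies in $\langle y\rangle$ anyway, or more simply one checks $\langle xy^2,xy^3\rangle$ contains $x$ and $y$; and $\langle x,y\rangle=G$ by definition. For the disjointness of the $\Sigma$-sets, the necessary conjugacy invariant is the image in the abelianisation together with the power in $Z(G)$: two elements can only be conjugate if they have the same image in $G/\langle z\rangle\cong C_{p^n}\times C_{p^n}$ (since that quotient is abelian) and the same $\langle z\rangle$-component. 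So I would compute, for each of $x$, $y$, $xy$ and each of $xy^2$, $xy^3$, $xy^2xy^3 = x^2y^5z^{2}$ (up to the exact power of $z$), the cyclic subgroup they generate and check that the only common element of the relevant $\Sigma$-sets is $e$. The essential numerical input is the same congruence as in Lemma \ref{BenLemma}, namely that the nontrivial powers of these elements that land in $\langle x^{p^{n-1}},y^{p^{n-1}}\rangle\subseteq Z(G)$ are of the shape $x^{c p^{n-1}}y^{d p^{n-1}}$ with the pairs $(c,d)$ arising from the two structures being "independent" modulo $p$ (e.g. $(1,0)$ and $(0,1)$ for the first pair versus $(1,2),(1,3),(2,5)$ for the second), so that no nontrivial power in one list is conjugate to a nontrivial power in the other. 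The extra $z$-powers only help, since they give a further coordinate to separate on.

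The main obstacle I anticipate is bookkeeping the central $z$-contributions carefully: unlike in Lemma \ref{BenLemma}, elements no longer multiply "diagonally", so $(xy^2xy^3)$ is not simply $x^2y^5$ but carries a $z$-factor, and one must make sure that when an element is raised to the power $p^{n-1}$ (to push it into $Z(G)$) the accumulated $z$-exponent behaves as expected — in particular that $z^{p}=e$ kills the awkward terms and that the surviving $x^{p^{n-1}},y^{p^{n-1}}$ parts are exactly as in the abelian model. Once that collection computation is pinned down, the rest is the same geometric-series-plus-binomial-theorem congruence already invoked in the proof of Lemma \ref{BenLemma}, and the disjointness follows by the same elementary comparison of cyclic subgroups. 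I would also note explicitly that $n\ge 2$ and $p\ge 5$ are used exactly where they are used in Lemma \ref{BenLemma}: $p\ge 5$ to have enough residues for the two structures to be independent (with $p=5$ handled by the $\{xy^2,xy^4\}$ variant), and $n\ge 2$ so that $p^{n-1}$ is a proper nontrivial power.
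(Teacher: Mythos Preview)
Your approach is correct and in the same spirit as the paper's, but you have overcomplicated it by modelling too closely on Lemma~\ref{BenLemma}. In the present group the quotient $G/\langle z\rangle\cong C_{p^n}\times C_{p^n}$ is already abelian, so there is no geometric-series computation to do: the image $(a,b)$ of $x^ay^bz^c$ in this quotient is a complete conjugacy invariant (conjugacy classes of non-central elements are full $\langle z\rangle$-cosets), and disjointness of the two $\Sigma$-sets reduces immediately to the six cyclic ``lines'' in $C_{p^n}\times C_{p^n}$ having pairwise trivial intersection across the two triples. The paper exploits this directly: it takes $\{\{x,y\},\{xy^2,xy^4\}\}$ uniformly for all $p\ge5$, writes each $\Sigma$-set explicitly as a union of $\langle z\rangle$-cosets indexed by $\{x^i,y^i,x^iy^i\}$ and $\{x^iy^{2i},x^iy^{4i},x^{2i}y^{6i}\}$ respectively (with the central elements excised from the thickening), and simply observes that these index sets meet only at $e$. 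This avoids your $p=5$ case split and the binomial bookkeeping you anticipate.

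Two small slips to note. First, your centre is wrong: since $[x^p,y]=z^p=e$ we have $Z(G)=\langle x^p,y^p,z\rangle$, not $\langle x^{p^{n-1}},y^{p^{n-1}},z\rangle$; luckily your actual argument runs through the abelianisation $G/\langle z\rangle$ rather than through $Z(G)$, so nothing breaks. Second, $(xy^2)^{-1}(xy^3)=y$ on the nose with no $z$-correction, and in any case $z\notin\langle y\rangle$, so the hedge ``up to a central factor \dots\ that lies in $\langle y\rangle$'' should be dropped.
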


\begin{proof}
For $p\ge 5$ and $n\ge 2$, it is clear that $G$ is a $2$-generated group by $\{x,y\}$ and $\{xy^2,xy^4\}$. Furthermore, we have distinct subgroups $\langle x\rangle,\langle y\rangle, \langle z\rangle$ of $G$ of orders $p^n,p^n,p$ respectively. As every element of $G$ can be put in the form $x^iy^jz^k$, it follows that the order of $G$ is $p^{2n+1}$. 

We now claim the following is a Beauville structure $\{\{x,y\},\{xy^2,xy^4\}\}$ for $G$. Since $\alpha_{i},\beta_{i}\in Z(G)$ and $[x,y]=z$ we can construct the following $\Sigma$-sets,
\[\Sigma(x,y)=\{e\}\bigcup \left( \bigcup_{i=1}^{p^n-1}\{x^{i},y^{i},x^{i}y^{i}\}\langle z\rangle\right) \setminus \bigcup_{i=1}^{p^{n-1}-1}\bigcup_{j=1}^{p-1}\{x^{ip}z^j,y^{ip}z^j,x^{ip}y^{ip}z^j\},\]
and
\[\Sigma(xy^2,xy^4)=\]
\[\{e\}\bigcup \left(\bigcup_{i=1}^{p^n-1}\{x^{i}y^{2i},x^{i}y^{4i},x^{2i}y^{6i}\}\langle z\rangle\right)\setminus \bigcup_{i=1}^{p^{n-1}-1}\bigcup_{j=1}^{p-1}\{x^{ip}y^{2ip}z^j,x^{ip}y^{4ip}z^j,x^{2ip}y^{6ip}z^j\},\]
for this group. Here, we prefer to write the $\alpha_{i}$'s and $\beta_{j}$'s in terms of powers of $x^{p}$ and $y^p$ respectively. Therefore, $\Sigma(x,y)\cap\Sigma(xy^2,xy^4)=\{e\}$.
\end{proof}

\section{Groups of order $\leq p^3$}\label{OrderP3}

All groups of order $p$ or $p^2$ are abelian for every prime $p$. Thus, by Theorem \ref{Cat} the only Beauville group of order less than $p^3$ is $C_p\times C_p$ for $p>3$. There are no abelian Beauville groups of order $p^3$.

The classification of groups of order $p^3$ is well-known; this result is due to \cite{Holder}. There are two non-abelian groups of order $p^3$. The first is of the form discussed in Lemma \ref{firstgen} and is thus not a Beauville group. The second is taken care of by the following, which is a special case of Lemma \ref{Oddpowers}.

\begin{lemma}\label{NathanLem}
For any prime $p\ge 7$ the group
$$G:=\langle x,y,z|x^p,y^p,z^p,[x,y]=z\rangle$$
is a non-abelian Beauville group of order $p^3$ with Beauville structure $\{\{x,y\},\{xy^2,xy^3\}\}$.
\end{lemma}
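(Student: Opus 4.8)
The plan is to exhibit an explicit Beauville structure and verify the two defining conditions: that each pair generates $G$, and that the associated $\Sigma$-sets meet only in the identity. Since this lemma is stated as ``a special case of Lemma \ref{Oddpowers}'', the cleanest route is to specialize that argument to $n=1$; there $\langle x\rangle,\langle y\rangle,\langle z\rangle$ are the only cyclic subgroups one needs to track, and because $\alpha_i,\beta_i$ are vacuous, $G$ is the extraspecial-type group of order $p^3$ and exponent $p$ (for $p$ odd), with $z=[x,y]$ central of order $p$. First I would record the basic structure of $G$: every element is uniquely $x^iy^jz^k$ with $0\le i,j,k\le p-1$, $Z(G)=\langle z\rangle$, and $G/Z(G)\cong C_p\times C_p$. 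This makes conjugacy transparent: two non-central elements are conjugate iff they have the same image in $G/Z(G)$, i.e.\ $x^iy^jz^k$ is conjugate precisely to $x^iy^jz^\ell$ for all $\ell$.

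Next I would check generation for the second pair. Using $xy^2\cdot(xy^3)^{-1}$ and the commutation relations one recovers $y$ (up to a central factor, but then also $z$), hence $x$, so $\langle xy^2,xy^3\rangle=G$; the first pair generates by definition. Then I would compute the powers. A direct induction (as in the proof of Lemma \ref{BenLemma}, or simply because $G$ has exponent $p$ for $p\ge5$) shows that for any non-central $g$, the cyclic subgroup $\langle g\rangle$ has order $p$ and its non-identity elements all lie in a single coset structure; concretely $\Sigma(x,y)$ consists of $\{e\}$ together with the conjugates of powers of $x$, $y$, and $xy$, which are exactly the non-identity elements of $\langle x\rangle\langle z\rangle$, $\langle y\rangle\langle z\rangle$, and $\langle xy\rangle\langle z\rangle$ respectively (as subsets of $G$, since conjugating multiplies by powers of $z$). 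Similarly $\Sigma(xy^2,xy^3)$ is $\{e\}$ together with the non-identity elements of $\langle xy^2\rangle\langle z\rangle$, $\langle xy^3\rangle\langle z\rangle$, and $\langle (xy^2)(xy^3)\rangle\langle z\rangle = \langle x^2y^5\rangle\langle z\rangle$ (after absorbing the central commutator correction, as in the $n=1$ specialization of Lemma \ref{Oddpowers}).

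The heart of the argument is then to show these six ``lines mod $z$'' — namely the images in $G/Z(G)\cong\mathbb{F}_p^2$ of $x,y,xy$ and of $xy^2,xy^3,x^2y^5$ — are pairwise distinct as one-dimensional subspaces, except that of course $\Sigma$-sets all contain $\{e\}$. In $\mathbb{F}_p^2$ the relevant directions are $(1,0),(0,1),(1,1)$ for the first triple and $(1,2),(1,3),(2,5)\sim(1,\tfrac52)$ for the second; one checks these three slopes $0,1,\infty$ versus $2,3,5/2$ are disjoint modulo $p$, which fails only for small $p$ (e.g.\ $2\equiv 0$ or $3\equiv 1$ forces $p\mid 2$, $p\mid 3$; $5/2\equiv 1$ forces $p\mid 3$, $5/2\equiv 0$ forces $p\mid 5$; and $5/2\equiv\infty$ cannot happen), so the condition $p\ge 7$ suffices (indeed $p\ge 5$ after replacing $xy^3$ by $xy^4$, which is why the excerpt's later lemmas use that variant). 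Hence the two $\Sigma$-sets intersect only in $\{e\}$.

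The main obstacle is bookkeeping rather than conceptual: correctly handling the central commutator corrections when forming products like $(xy^2)(xy^3)$ and confirming that conjugation in $G$ really does fill out the entire $\langle z\rangle$-coset of each power, so that the $\Sigma$-sets are exactly these six cosets-minus-identity and nothing is missed. Once that is in hand, the disjointness reduces to the elementary slope computation above, and the restriction $p\ge 7$ emerges precisely from requiring the three ``new'' slopes $2,3,5/2$ to avoid $\{0,1,\infty\}$ in $\mathbb{F}_p$.
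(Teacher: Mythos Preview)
Your argument is correct and, at its core, is the same reduction the paper uses: both proofs exploit that $Z(G)=\langle z\rangle$, that every non-central element has centralizer $\langle g,z\rangle$, and hence that the conjugacy class of $g$ is exactly its coset $gZ(G)$. Where you and the paper diverge is in the bookkeeping of the final check. The paper keeps everything inside $G$: it rewrites the $\Sigma$-condition as a centralizer-intersection condition $(C_G(x)\cup C_G(y)\cup C_G(xy))\cap(C_G(xy^2)\cup C_G(xy^3)\cup C_G(xy^2xy^3))=Z(G)$, then verifies it by explicitly computing the nine conjugates $k^{-1}hk$ for $k\in\{x,y,xy\}$ and $h\in\{xy^2,xy^3,(xy^2xy^3)^{-1}\}$ and reading off that none equals $h$ when $p\ge 7$. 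You instead pass to the quotient $G/Z(G)\cong\mathbb{F}_p^2$ and phrase the same nine checks as the pairwise distinctness of six one-dimensional subspaces, i.e.\ the slopes $\{0,1,\infty\}$ versus $\{2,3,5/2\}$. Your route is cleaner and makes the role of the hypothesis $p\ge 7$ more transparent (it is exactly the condition that $2,3,5/2\notin\{0,1,\infty\}$ in $\mathbb{F}_p$, and one sees immediately that $p=5$ fails only because of the slope $5/2$, explaining why the $\{xy^2,xy^4\}$ variant works there). The paper's explicit computations, on the other hand, require no structural input beyond the commutator relation and so generalize more mechanically to the later order-$p^5$ examples where the quotient picture is less tidy.

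One minor slip in your enumeration: you omit the case $3\equiv 0$, which is what actually forces $p\nmid 3$ (you attribute $p\mid 3$ only to $5/2\equiv 1$); and ``$5/2\equiv\infty$ cannot happen'' should really read ``forces $p\mid 2$''. Neither affects the conclusion.
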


\begin{proof}
The group $G$ is the extra-special plus type group $p_+^{1+2}$. Since $xyx^{-1}y^{-1}=[x,y]=z$ we have that $xyx^{-1}=yz$ and since $C_G(y^i)=\langle y,z\rangle$ for $1\leq i<p$ we see that the conjugates of $y^i$ are precisely the elements $y^iz^j$ for $1\leq j\leq p$. Similarly $C_{G}(g)=\langle g,z\rangle$ for all $g\in G\setminus Z(G)$.

Therefore, the condition, $\Sigma(x,y)\cap\Sigma(xy^2,xy^3)=\{e\}$ is equivalent to

$$(C_G(x)\cup C_G(y)\cup C_G(xy))\cap(C_G(xy^2)\cup C_G(xy^3)\cup C_G(xy^2xy^3))=Z(G)$$

Again, this can be shown to be equivalent to checking the equations $khk^{-1}\ne h$ for all $k\in \{x,y,(xy)^{-1}\}$ and $h\in \{xy^2,xy^3,(xy^2xy^3)^{-1}\}$. When showing this, we make use of the equation $(xy)^{-1}z=x^{p-1}y^{p-1}$ and $(xy^2xy^3)^{-1}=y^{p-5}x^{p-2}z^{2}$. We get the equations,
\begin{minipage}[b]{0.4\linewidth} 
\begin{align*}x^{-1}xy^{2}x &= y^2x; \\ y^{-1}xy^{2}y &= yx^2z^2;\\ y^{-1}x^{-1}xy^2xy &= y^2xz;\\ \end{align*} 
\end{minipage}
\begin{minipage}[b]{0.4\linewidth} 
\begin{align*}x^{-1}xy^{3}x &= y^3x;\\ y^{-1}xy^{3}y &= y^2x^2z^3;\\ y^{-1}x^{-1}xy^3xy &= y^3xz;\\ \end{align*} 
\end{minipage}
\begin{align*}x^{-1}y^{p-5}x^{p-2}z^2x &=y^{p-5}x^{2p-4}z^{2+(p-5)(p-1)} ;\\ y^{-1}y^{p-5}x^{p-2}z^2y &=y^{p-5}x^{p-2}z^{p} ;\\ y^{-1}x^{-1}y^{p-5}x^{p-2}z^2xy &=y^{p-5}x^{2p-2}z^{2p-1} .\\ \end{align*}

Therefore, as centralizing does not occur for $p\ge 7$, the result follows.
\end{proof}

\begin{remark}
The group given by Lemma \ref{NathanLem} for $p=7$ is the second group in a family of groups in \cite[Theorem 3.2]{BarkerOther}. There, it arises as a $7$-quotient of a finite index subgroup of an infinite group with special presentation related to a finite projective plane.
\end{remark}

For groups of order $5^3$, a MAGMA search reveals that the only Beauville $5$-group of order $5^3$ is the one given by
\[G:=\langle x,y,z|x^5,y^5,z^5,[x,y]=z\rangle,\]
with Beauville structure $\{\{x,y\},\{xy^2xy^4\}\}$.

 The above has the following consequence. 

\begin{cor}
The smallest non-abelian Beauville $p$-group for $p\ge5$ has order $p^3$.
\end{cor}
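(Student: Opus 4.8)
The plan is to prove this in two independent pieces: a lower bound showing that no group of order smaller than $p^3$ can qualify, and an existence statement at order $p^3$ valid for every $p\ge 5$.

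For the lower bound I would simply invoke the elementary fact used already at the start of Section \ref{OrderP3}, namely that every group of order $p$ or $p^2$ is abelian. Hence there is no non-abelian group of any kind of order less than $p^3$, so in particular there is none that is Beauville. (For completeness one notes that by Theorem \ref{Cat} the group $C_p\times C_p$ is the unique abelian Beauville group of order $\le p^2$ when $p>3$, but this is not the object sought.)

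For existence at order $p^3$ I would split according to the prime. By H\"older's classification there are exactly two non-abelian groups of order $p^3$; the first is of the shape treated in Lemma \ref{firstgen} (the case $n=2$) and is therefore not Beauville, so the only candidate is the second, the extraspecial group $p_+^{1+2}=\langle x,y,z\mid x^p,y^p,z^p,[x,y]=z\rangle$. For $p\ge 7$ this is precisely the group handled by Lemma \ref{NathanLem}, which exhibits the explicit Beauville structure $\{\{x,y\},\{xy^2,xy^3\}\}$. For $p=5$ the Beauville structure $\{\{x,y\},\{xy^2,xy^4\}\}$ on the same presentation was verified by the MAGMA search reported immediately above the corollary. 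Together these produce a non-abelian Beauville $p$-group of order $p^3$ for every $p\ge 5$, and combining this with the lower bound establishes the claim.

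I do not expect a genuinely new obstacle here: the substantive work — checking that the displayed pairs really are Beauville structures — is exactly the content of Lemma \ref{NathanLem} and of the $p=5$ computation, both already in hand. The only point requiring care is that the conjugation identities in the proof of Lemma \ref{NathanLem} genuinely require $p\ge 7$ and can fail at $p=5$, which is precisely why the prime $5$ must be disposed of by the separate computation rather than folded into the general argument.
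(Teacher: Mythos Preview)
Your proposal is correct and follows essentially the same approach as the paper: the lower bound comes from the abelian-ness of groups of order $p$ and $p^2$, and existence at order $p^3$ is supplied by Lemma~\ref{NathanLem} for $p\ge 7$ together with the MAGMA verification for $p=5$.
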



\section{Groups of order $p^4$}\label{OrderP4}
The classification of groups of order $p^4$ is well-known; this result is due to \cite{Holder}. We list the non-abelian 2-generated groups of order $p^4$ in Table $1$ for $p$ odd and Table $2$ for $p=2$. The only abelian Beauville group of order $p^4$ is $C_{p^2}\times C_{p^2}$ for $p>3$.

\begin{table}
\begin{center}
\begin{tabular}{|l|l|c|}
\hline
Name&Presentation&Beauville?\\
\hline
$G_1$&$\langle x,y|x^{p^3},y^p,x^y=x^{1+p^2}\rangle$&No\\
$G_2$&$\langle x,y|x^{p^2},y^{p^2},x^y=x^{p+1}\rangle$&Yes $(p>3)$\\
$G_3$&$\langle x,y,z|x^{p^2},y^p,z^p,[x,z]=y\rangle$&No\\
$G_4$&$\langle x,y,z|x^{p^2},y^p,z^p,x^y=x^{p+1},[x,z]=y\rangle$&No\\
$G_5$&$\langle x,y,z|x^{p^2},y^p,z^p=x^p,x^y=x^{p+1},[x,z]=y\rangle$&No\\
$G_6$&$\langle x,y,z|x^{p^2},y^p,z^p=x^{p\alpha},x^y=x^{p+1},[x,z]=y\rangle$&No\\
$G_7$ ($p>3$)&$\langle w,x,y,z|w^p,x^p,y^p,z^p,[y,z]=x,[x,z]=w\rangle$&Yes $(p>3)$\\
$G_8$ ($p=3$)&$\langle x,y,z|x^9,y^3,z^3,[x,z]=y,[y,z]=x^3\rangle$&No\\
\hline
\end{tabular}

\label{pfour}\caption{The non-abelian 2-generated groups of order $p^4$, $p$ odd. In the groups $G_3,...,G_6$ and $G_8$, the presence of the relation $[x,z]=y$ shows that the group is 2-generated. In $G_7$ the presence of the relations $[y,z]=x$ and $[x,z]=w$ show that the group is 2-generated. In $G_6$ $\alpha$ is any quadratic non-residue (mod $p$).}
\end{center}
\end{table}

The group $G_1$ is not Beauville as a special case of Lemma \ref{firstgen}. The groups $G_3$, $G_4$ $G_5$, $G_6$ and $G_8$ are never Beauville groups by an argument analogous to the proof of Lemma \ref{firstgen}, that is, in each case all elements of order $p$ are contained in a proper subgroup, so any generating set must contain an element of order $p^2$, but since all elements of order $p^2$ power up to the same elements of order $p$, we cannot have a Beauville structure. The groups in Table 2 are easily checked by computer not to be Beauville groups.

The group $G_{2}$ is a Beauville group for $p>3$ by Lemma \ref{BenLemma} and $G_7$ is a Beauville group for $p>3$ by an argument analogous to the proof of Lemma \ref{NathanLem} showing that $\{\{w,z\},\{wz^2,wz^3\}\}$ is a Beauville structure. We can state the above information in the following lemma. 

\begin{lemma}
For any prime $p\ge 5$ the groups $G_2$ and $G_7$ are non-abelian Beauville groups of order $p^4$.

For $p=3$, the groups $G_2$ and $G_7$ are not Beauville groups.
\end{lemma}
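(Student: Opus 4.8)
The plan is to separate the primes $p\ge5$, where both $G_2$ and $G_7$ turn out to be Beauville, from $p=3$, where neither is. For $p\ge5$ very little is new. The group $G_2$ is literally the $n=2$ case of Lemma \ref{BenLemma}, and so carries the Beauville structure exhibited there ($\{\{x,y\},\{xy^2,xy^3\}\}$, replaced by $\{\{x,y\},\{xy^2,xy^4\}\}$ when $p=5$). For $G_7$ I would repeat the argument of Lemma \ref{NathanLem} almost word for word, using the generating pairs $\{y,z\}$ and $\{yz^2,yz^3\}$: since $w=[[y,z],z]$ is central and $x=[y,z]$ commutes with $y$, each of the two $\Sigma$-sets is a union of centralizers of non-central elements, so the Beauville condition collapses to checking that finitely many explicit commutators $khk^{-1}h^{-1}$ are nontrivial, which holds precisely because the exponents that appear are nonzero modulo $p$ once $p\ge5$.

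For $p=3$ the starting point is that both $G_2$ and $G_7$ are $2$-generated, so $G/\Phi(G)\cong C_3\times C_3$, a space with exactly four one-dimensional subspaces, which I will call lines. If $\{a,b\}$ generates $G$, then $\overline a$ and $\overline b$ are independent and $\overline{ab}=\overline a+\overline b$ lies on a line other than $\langle\overline a\rangle$ and $\langle\overline b\rangle$; hence the triple $\{a,b,ab\}$ meets exactly three of the four lines. In particular each generating triple omits at most one line, so any two of them share a line.

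For $G_2$ this suffices, via the reasoning Catanese uses for $C_n\times C_n$ with $3\mid n$ (Theorem \ref{Cat}). Here $G_2$ has class $2$, $G_2'=\langle x^3\rangle$ is central, $\Phi(G_2)=\langle x^3,y^3\rangle$, every element outside $\Phi(G_2)$ has order $9$, and a class-$2$ computation gives $(x^iy^j)^3=x^{3i}y^{3j}$. Given two generating triples and a common line $L$, let $u$ be the member of the first triple over $L$ and $v$ that of the second; both have order $9$ and $\overline v=\lambda\overline u$ for some $\lambda\in\{1,2\}$, so $v^3=(u^3)^\lambda=u^{3\lambda}\ne e$. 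As a simultaneous power of $u$ and of $v$, this element lies in both $\Sigma$-sets, so there is no Beauville structure.

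For $G_7$ I would use a stronger form of rigidity directly. One checks that $\gamma_3(G_7)=\langle w\rangle$ has order $3$, and that of the four lines exactly two, say $L_1$ and $L_2$, have the property that every element of $G_7$ lying over them has order $9$ with cube generating $\langle w\rangle$ (the other two lines, and $\Phi(G_7)$, consist of elements of order dividing $3$). Since every generating triple meets at least one of $L_1,L_2$, it contains an element $g$ of order $9$, and then $\{g^3,g^6\}=\langle w\rangle\setminus\{e\}$; thus $w\in\Sigma(a,b)$ for \emph{every} generating pair $\{a,b\}$ of $G_7$. Consequently $w$ lies in $\Sigma(x_1,y_1)\cap\Sigma(x_2,y_2)$ for any two generating pairs, so $G_7$ is not Beauville. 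The part I expect to cost the most effort is exactly this structural input for $G_7$ at $p=3$ — pinning down the two ``order-$9$ lines'' and confirming that every order-$9$ element cubes into $\langle w\rangle$ — because it requires controlling the cube map in a group of nilpotency class $3$; the cleanest route is the Hall--Petrescu collection formula together with the observation that $\gamma_2(G_7)=\langle x,w\rangle$ has exponent $3$, which annihilates the correction terms. The remaining steps (the line count in $C_3\times C_3$ and the class-$2$ power identity for $G_2$) are routine, and all the $p=3$ statements can in any case be confirmed by a direct machine computation with the relevant groups of order $81$.
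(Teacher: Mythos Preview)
Your proposal is correct. For $p\ge 5$ it coincides with the paper's approach: $G_2$ is handled by the $n=2$ case of Lemma~\ref{BenLemma}, and for $G_7$ the paper likewise says only that one runs the argument of Lemma~\ref{NathanLem} on a $2$-generating pair (the paper writes $\{\{w,z\},\{wz^2,wz^3\}\}$, almost certainly a slip for $\{\{y,z\},\{yz^2,yz^3\}\}$, since $w$ is central; your $\{y,z\}$ is the right choice).

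For $p=3$ the two treatments genuinely diverge. The paper offers no argument for this case---the lemma is stated as a summary, and the $p=3$ exclusions are left to the computer checks invoked elsewhere in Section~\ref{OrderP4}. Your route is conceptual: a pigeonhole on the four lines of $G/\Phi(G)\cong C_3\times C_3$ forces any two generating triples to share a line, and then you identify a nontrivial element common to the corresponding cyclic subgroups (via the class-$2$ cube identity for $G_2$, and via the fact that in $G_7$ the cube map factors through $G_7/\Phi(G_7)$ and hits $\gamma_3=\langle w\rangle$ nontrivially on exactly two of the four lines). This buys an explanation rather than a verification, and your $G_7$ argument in particular is a clean instance of the ``every generating triple powers into a fixed central subgroup'' obstruction that also underlies Lemma~\ref{firstgen}. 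The only point that needs genuine care is the one you flag: checking that $g^3$ depends only on $g\Phi(G_7)$, which follows since $\gamma_2(G_7)$ has exponent $3$ and class $\le 3$ kills the Hall--Petrescu correction terms when one factor lies in $\gamma_2$.
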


\begin{table}
\begin{center}
\begin{tabular}{|l|l|}
\hline
Name&Presentation\\
\hline
$G_1,G_2,G_3$& as in Table 1 \\
$G^{'}_4$&$\langle x,y|x^8,y^2,x^y=x^7\rangle$\\
$G^{'}_5$&$\langle x,y|x^8,y^2,x^y=x^3\rangle$\\
$G^{'}_6$&$\langle x,y|x^8,y^4,x^y=x^{-1},x^4=y^2\rangle$\\
\hline
\end{tabular}
\label{pfoureven}\caption{The non-abelian 2-generated groups of order $2^4$.}
\end{center}
\end{table}


\section{Groups of order $p^5$}\label{p5}

Computer calculations using MAGMA show that this is the first occurrence of a Beauville $3$-group. This group is, in fact, the only Beauville group of order $3^5$.

\begin{example}\label{p3}
The group
\[\langle x,y,z,w,t|x^3,y^3,z^3,w^3,t^3,y^x=yz,z^x=zw,z^y=zt\rangle\]
is a non-abelian Beauville group of order $3^5$ with Beauville structure given by $\{\{x,y\},\{xt,y^2w\}\}$.
\end{example}

\begin{table}
\begin{center}
\begin{tabular}{|l|l|l|c|}
\hline
$p$&$n$&$h(p)$&$g(p)$\\
\hline
2&-&19&0\\
3&3&29 &1 \\
5&2, 3, 7, 8, 9, 10, 11, 12, 13, 14, 19, 20, 23, 30, 33&37 &15\\
7&2, 3, 7, 8, 9, 10, 11, 12, 13, 14, 15, 16, 21, 22, 25,& & \\
 & 32, 37& 41&17\\
11&2, 3, 7, 8, 9, 10, 11, 12, 13, 14, 15, 16, 17, 18, 19, & &\\
&20, 25, 26, 29, 36, 39&41&21\\
13 &2, 3, 7, 8, 9, 10, 11, 12, 13, 14, 15, 16, 17, 18, 19, &&\\
 &20, 21, 22, 27, 28, 31,38, 43&49& 23 \\
17 & 2, 3, 7, 8, 9, 10, 11, 12, 13, 14, 15, 16, 17, 18, 19, &&\\ 
  &20, 21, 22, 23, 24, 25,26, 31,32, 35, 42, 45&49& 27\\
 19 &2, 3, 7, 8, 9, 10, 11, 12, 13, 14, 15, 16, 17, 18, 19,&&\\
&20, 21, 22, 23, 24, 25, 26, 27, 28, 33, 34, 37, 44, 49&53&29\\
\hline
\end{tabular}
\label{pfive}\caption{The groups {\ttfamily SmallGroup}($p^5$,$n$) for $p\leq19$ a prime that have Beauville structures. $h(p)$ (respectively $g(p)$) is the number of $2$-generated (respectively Beauville) groups of order $p^5$.}
\end{center}
\end{table}

The computer program MAGMA was further used to explore the possible Beauville groups of order $p^5$, for $p>3$. The results of our computer experimentations are presented in Table $3$. We note that there are no abelian Beauville groups of order $p^5$.

We observed that for each prime $5\le p\le 19$ there are exactly $p+10$ Beauville groups of order $p^5$. The presentations for the $p+10$ groups are given below, seven $H_{i}$ groups and $p+3$ $H_{i,j,k,l}$ groups. The remainder of this section is devoted to proving Theorem \ref{familyP5}. We start by showing that five of the seven $H_{i}$ groups are Beauville groups. We follow this up by using the work of \cite[Section 4.5, part (6)]{J} to analyze a family of non-isomorphic groups given by the groups $H_{i,j,k,l}$.

Let ${\bf X}=$ $\{x,y,z,w,t\}$ and set $H_{i}:=\langle {\bf X}| {\bf R}_{i}\rangle$ for the below relations,
\[{\bf R}_{1}=\{x^p=w,y^p=t,z^p,w^p,t^p,[y,x]=z\},\]
\[{\bf R}_{2}=\{x^p,y^p,z^p,w^p,t^p,[y,x]=z,[z,x]=w,[z,y]=t\},\]
\[{\bf R}_{3}=\{x^p=w,y^p=t,z^p,w^p,t^p,[y,x]=z,[z,x]=t\},\]
\[{\bf R}_{4}=\{x^p=w,y^p=t^r,z^p,w^p,t^p,[y,x]=z,[z,x]=t\},\]
where $r$ is taken as $2,5,6,7,6,10$ for $p=5,7,11,13,17,19$ and
\[{\bf R}_{5}=\{x^p=w,y^p=t,z^p,w^p,t^p,[y,x]=z,[z,x]=t,[z,y]=t\},\]
\[{\bf R}_{6}=\{x^p,y^p,z^p,w^p,t^p,[y,x]=z,[z,x]=w,[w,x]=t\}\]
\[{\bf R}_{7}=\{x^p,y^p,z^p,w^p,t^p,[y,x]=z,[z,x]=w,[z,y]=t,[w,x]=t\}.\]
\begin{remark}
It would be interesting to know how $r$, which appears in the set of relations ${\bf R}_4$, varies as a function of $p$.
\end{remark}

The above $H_{i}$ groups correspond to Beauville groups for $5\leq p \leq 19$. We now look to \cite[Section 4]{FJ} on lifting Beauville structures to extend the computational results to primes $p>19$.

\begin{definition}\label{faith}
Let $G$ be a finite group with a normal subgroup $N$. An element $g$ of $G$ is faithfully represented in $G/N$ if $\langle g\rangle \cap N =\{e\}$. 
\end{definition}

If $T=\{g_{1},...,g_{k}\}$ is a $k$-tuple of elements of $G$, we say that this $k$-tuple is faithfully represented in $G/N$ if $\langle g_{i}\rangle \cap N=\{e\}$ for $1\leq i \leq k$.

\begin{lemma}\cite[Lemma 4.2]{FJ}\label{lift}
Let $G$ have generating triples $\{x_{i},y_{i},z_{i}\}$ with $x_{i}y_{i}z_{i}=e$ for $i=1,2$ and a normal subgroup $N$ such that at least one of these triples is faithfully represented in $G/N$.
 
 If the images of these triples corresponds to a Beauville structure for $G/N$, then these triples correspond to a Beauville structure for $G$. 
\end{lemma}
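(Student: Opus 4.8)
The plan is to argue directly from Definition \ref{maindef}, pushing everything through the quotient map $\pi\colon G\to G/N$. First I would record a bookkeeping observation: since $x_iy_iz_i=e$ we have $(x_iy_i)^{-1}=z_i$, so $\Sigma(x_i,y_i)$ is exactly the set of $G$-conjugates of powers of the three elements $x_i,y_i,z_i$. Because conjugation and taking powers commute with $\pi$, and $\pi$ is surjective, this gives $\pi(\Sigma(x_i,y_i))\subseteq\Sigma(\pi(x_i),\pi(y_i))$ (in fact equality, but only this inclusion is needed). Moreover $\langle x_i,y_i\rangle=G$ forces $\langle\pi(x_i),\pi(y_i)\rangle=G/N$, so the hypothesis that the two images form a Beauville structure for $G/N$ is meaningful.

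The first step of the proof proper uses nothing about faithfulness. Suppose, for contradiction, that some $g\neq e$ lies in $\Sigma(x_1,y_1)\cap\Sigma(x_2,y_2)$. Applying $\pi$ and using the inclusion above, $\pi(g)\in\Sigma(\pi(x_1),\pi(y_1))\cap\Sigma(\pi(x_2),\pi(y_2))=\{e\}$, hence $g\in N$.

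The second step uses faithfulness to upgrade $g\in N$ to $g=e$. Without loss of generality assume the triple $\{x_1,y_1,z_1\}$ is the one faithfully represented in $G/N$, i.e.\ (by the extension of Definition \ref{faith} to tuples) $\langle x_1\rangle\cap N=\langle y_1\rangle\cap N=\langle z_1\rangle\cap N=\{e\}$. Since $g\in\Sigma(x_1,y_1)$, there are $h\in G$, an exponent $a$, and $u\in\{x_1,y_1,z_1\}$ with $g=(u^a)^h$. As $N$ is normal and $g\in N$, conjugating back by $h^{-1}$ gives $u^a=g^{h^{-1}}\in N$, so $u^a\in\langle u\rangle\cap N=\{e\}$ and therefore $g=e$, contradicting our choice. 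Hence $\Sigma(x_1,y_1)\cap\Sigma(x_2,y_2)=\{e\}$, and since the two triples already generate $G$, they constitute a Beauville structure for $G$.

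There is no real obstacle here; the one point requiring a little care is the bookkeeping of the first paragraph—ensuring that a power of $x_iy_i$ is recorded as a power of $z_i$, so that the hypothesis one actually needs and uses is faithful representation of the whole triple $\{x_1,y_1,z_1\}$ (not merely of the pair $\{x_1,y_1\}$). One should also note the argument is symmetric in the two triples, so assuming faithfulness of either one suffices.
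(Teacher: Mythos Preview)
Your argument is correct. The paper does not actually supply a proof of this lemma: it is quoted verbatim as \cite[Lemma 4.2]{FJ} and used as a black box, so there is no in-paper proof to compare against. What you have written is the standard direct argument---push a putative nontrivial element of $\Sigma(x_1,y_1)\cap\Sigma(x_2,y_2)$ into $N$ via the Beauville condition on $G/N$, then use normality of $N$ together with $\langle u\rangle\cap N=\{e\}$ for $u\in\{x_1,y_1,z_1\}$ to force it to be trivial---and it goes through without issue. Your care in noting that the relevant hypothesis is faithful representation of the full triple (so that the $z_i=(x_iy_i)^{-1}$ case is covered) is exactly the point that needs to be made explicit.
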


We can now make the following conclusions for some of the group structures $H_{i}=\langle {\bf X}| {\bf R}_{i}\rangle$.

\begin{lemma}\label{lem1}
Let $H_{i}=\langle {\bf X}| {\bf R}_{i}\rangle$ for $i=2,6,7$ and $p\ge 5$ a prime. Then, $H_{i}$ is a Beauville group of order $p^5$.
\end{lemma}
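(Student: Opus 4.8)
The plan is to exhibit an explicit Beauville structure for each of $H_2$, $H_6$, $H_7$ and verify it, using the lifting machinery of Lemma~\ref{lift} to reduce the verification to a smaller quotient wherever that is convenient. In each case I would take the two generating pairs to be $\{x,y\}$ and $\{xy^2,xy^3\}$ (or a minor variant such as $\{xy^2,xy^4\}$ when $p=5$, exactly as in Lemmas~\ref{BenLemma}, \ref{Oddpowers} and \ref{NathanLem}), together with the associated third elements $(xy)^{-1}$ and $(xy^2xy^3)^{-1}$ so that we genuinely have generating triples with product $e$. That these pairs generate is immediate from the commutator relations: in $H_2$, $[y,x]=z$, $[z,x]=w$, $[z,y]=t$ force $z,w,t\in\langle x,y\rangle$, and $(xy^2)^{-1}(xy^3)=y$ modulo the lower-order terms, so both pairs generate; $H_6$ and $H_7$ are handled identically since there too all of $z,w,t$ are iterated commutators of $x$ and $y$.

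The substantive step is to check that $\Sigma(x,y)\cap\Sigma(xy^2,xy^3)=\{e\}$. Here I would argue as in Lemma~\ref{Oddpowers}: since $H_i$ has class $\le 4$ with a central subgroup containing $\langle t\rangle$ (and in the relevant quotients the groups become class $2$ or $3$), a necessary condition for two non-central elements to be conjugate is that they power up to the same element of the centre, and conjugacy classes of a fixed element are cosets of a well-understood central subgroup. Concretely, one computes $(xy)^r$, $(xy^2)^r$, $(xy^3)^r$ and $(xy^2xy^3)^r$ as words $x^{a_r}y^{b_r}(\text{central stuff})$ using the collection process / binomial-coefficient identities already invoked in the proof of Lemma~\ref{BenLemma}; the key point is that the pairs of exponents $(a_r,b_r)$ occurring for powers of elements built from $\{x,y\}$ are disjoint (outside the centre) from those occurring for powers of elements built from $\{xy^2,xy^3\}$, for $p\ge 5$. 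This is exactly the kind of calculation done explicitly for $p^3$ in Lemma~\ref{NathanLem} and sketched for $p^{2n+1}$ in Lemma~\ref{Oddpowers}, and it goes through verbatim for $i=2$; for $i=6,7$ the extra relation $[w,x]=t$ only adds a further central correction term that does not affect the disjointness of the $(a_r,b_r)$-pairs.

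For the primes $p\le 19$ the conclusion is already the content of the computer search recorded in Table~3, so the only new work is the uniform argument for $p>19$, and here Lemma~\ref{lift} does the heavy lifting: one takes $N=\langle t\rangle$ (or $\langle w,t\rangle$), checks that the triple $\{x,y,(xy)^{-1}\}$ is faithfully represented in $H_i/N$ — obvious since $x$ and $y$ have order $p$ and $N$ consists of commutators — and observes that $H_i/N$ is one of the smaller groups (of order $p^4$ or $p^3$) already shown Beauville with this very structure in Lemma~\ref{NathanLem} and the $p^4$ analysis, so the structure lifts. The main obstacle I anticipate is purely bookkeeping: keeping track of the central correction terms in the power formulas $(xy^2xy^3)^r$ for $H_6$ and $H_7$, where the degree-$3$ and degree-$4$ commutators contribute, and confirming that none of these accidental coincidences in the centre create a non-trivial overlap between the two $\Sigma$-sets. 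Once the power formulas are in hand this is routine, and the disjointness of the leading $(x,y)$-exponent data makes the non-central part automatic.
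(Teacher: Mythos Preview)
Your proposal is essentially the paper's proof: take $N=Z(H_i)=\langle w,t\rangle$, observe that $H_i/N$ is exactly the extra-special group $G$ of Lemma~\ref{NathanLem}, note that the triple $\{x,y,(xy)^{-1}\}$ is faithfully represented (since all generators of $H_i$ have order $p$), and lift the Beauville structure $\{\{x,y\},\{xy^2,xy^3\}\}$ via Lemma~\ref{lift} for $p\ge 7$, handling $p=5$ separately by MAGMA with $\{xy^2,xy^4\}$. The paper dispenses with your parallel discussion of a direct $\Sigma$-set computation --- the lifting argument alone does all the work.
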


\begin{proof}
Firstly, for $p=5$ MAGMA calculations show that the groups $H_{i}$ for $i=2,6,7$ have Beauville structures corresponding to $\{\{x,y\},\{xy^2,xy^4\}\}$.

Secondly, let $p\ge 7$. For each group $H_{i}$ the center $Z_{i}=Z(H_{i})$ is given by the subgroup $\langle t,w\rangle$ and $\{x,y\},\{xy^2,xy^3\}$ are two generating sets for the groups $H_{i}$ for $i=2,6,7$. The quotient group $H_{i}/Z_{i}$ is isomorphic to the group $G$ given in Lemma \ref{NathanLem}. Clearly, the images of $x,y$ and $xy$ in $H_{i}/Z_{i}$ are faithfully represented (in the sense of Definition \ref{faith}) and correspond with the Beauville structure $\{\{x,y\},\{xy^2,xy^3\}\}$ for the group $G$.

Thus, by Lemma \ref{lift} we see that the Beauville structure $\{\{x,y\},\{xy^2,xy^3\}\}$ lifts to a Beauville structure for the groups $H_{i}$ for $i=2,6,7$.
\end{proof}

\begin{lemma}\label{lem2}
Let $H_{1}=\langle {\bf X} | {\bf R}_{1}\rangle$ and $p\ge 5$ a prime. Then, $H_{1}$ is a Beauville group of order $p^5$.
\end{lemma}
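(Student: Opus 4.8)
The plan is to prove this by lifting a Beauville structure from a quotient, using Lemma \ref{lift}. First I would unpack the presentation: in ${\bf R}_{1}$ the elements $z$, $w$ and $t$ occur in no commutator relation, hence are central, so $H_{1}$ has nilpotency class $2$; moreover $w=x^{p}$, $t=y^{p}$ and $z=[y,x]$, so $H_{1}=\langle x,y\rangle$, and the normal form $x^{i}y^{j}z^{k}$ with $0\le i,j<p^{2}$ and $0\le k<p$ gives $|H_{1}|=p^{5}$. Taking the (central, order $p$) normal subgroup $N=\langle z\rangle$, the quotient $H_{1}/N$ is abelian and equals $\langle\bar x\rangle\times\langle\bar y\rangle\cong C_{p^{2}}\times C_{p^{2}}$, which is a Beauville group for $p\ge 5$ by Theorem \ref{Cat}.

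Next I would take as candidate generating triples $\{x,y,(xy)^{-1}\}$ and, for $p\ge 7$, $\{xy^{2},\,xy^{3},\,(xy^{2}xy^{3})^{-1}\}$ (for $p=5$ one replaces $xy^{3}$ by $xy^{4}$, exactly as in Lemma \ref{lem1}, the case $p=5$ also being settled by the MAGMA computation already recorded). That each pair generates $H_{1}$ is immediate from the group axioms: $(xy^{2})^{-1}(xy^{3})=y^{-2}x^{-1}xy^{3}=y$ and $(xy^{2})y^{-2}=x$, so $\langle xy^{2},xy^{3}\rangle\supseteq\langle x,y\rangle=H_{1}$ (and likewise with $xy^{4}$, since $2$ is a unit mod $p^{2}$).

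It then remains to check the two hypotheses of Lemma \ref{lift}. Faithful representation in $H_{1}/\langle z\rangle$ holds because $g^{r}\in\langle z\rangle$ forces the image of $g^{r}$ in $C_{p^{2}}\times C_{p^{2}}$ to be trivial; since $\bar x=(1,0)$, $\bar y=(0,1)$ and $\overline{xy}=(1,1)$ each have order $p^{2}$, this gives $g^{r}=e$, so the first triple (indeed every member of it) is faithfully represented. Here one uses the class-$2$ identity $(xy)^{p}=x^{p}y^{p}z^{\binom{p}{2}}=wt$, the commutator term vanishing since $z^{p}=e$ and $p$ is odd, so that $xy$ genuinely has order $p^{2}$. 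Finally one verifies that the images form a Beauville structure for $C_{p^{2}}\times C_{p^{2}}$; since $\Sigma(a,b)=\langle a\rangle\cup\langle b\rangle\cup\langle ab\rangle$ in an abelian group, this reduces to
\[\bigl(\langle(1,0)\rangle\cup\langle(0,1)\rangle\cup\langle(1,1)\rangle\bigr)\cap\bigl(\langle(1,2)\rangle\cup\langle(1,3)\rangle\cup\langle(2,5)\rangle\bigr)=\{(0,0)\},\]
a routine check of nine intersections of cyclic subgroups, each trivial because $2$, $3$ and $5$ are units modulo $p^{2}$ when $p\ge 7$; for $p=5$ the analogous computation with $\{(1,2),(1,4),(2,6)\}$ works. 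Lemma \ref{lift} then yields the claim.

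The main obstacle is not an isolated hard step but arranging that the downstairs triples simultaneously (i) generate $H_{1}$, (ii) are faithfully represented modulo $\langle z\rangle$, and (iii) project to a genuine Beauville structure of $C_{p^{2}}\times C_{p^{2}}$; it is the last requirement that forces the $p$-dependent choice of the second pair, so the small prime $p=5$ must be handled separately. Everything else reduces to the elementary commutator calculus of class-$2$ groups and the finite intersection check in $C_{p^{2}}\times C_{p^{2}}$; alternatively one could verify a Beauville structure for $H_{1}$ directly, as in the proof of Lemma \ref{NathanLem}, but the lifting route is shorter.
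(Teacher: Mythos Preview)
Your argument is correct, but it is not the route the paper takes. The paper's proof is a single sentence: it observes that $H_{1}$ is exactly the $n=2$ instance of the family in Lemma~\ref{Oddpowers} (with $w=\alpha_{1}=x^{p}$, $t=\beta_{1}=y^{p}$), and hence inherits the Beauville structure $\{\{x,y\},\{xy^{2},xy^{4}\}\}$ already established there by an explicit $\Sigma$-set computation. You instead quotient by the central commutator $\langle z\rangle$ to land in $C_{p^{2}}\times C_{p^{2}}$ and invoke Catanese's Theorem~\ref{Cat} together with the lifting Lemma~\ref{lift}. This is closer in spirit to the proof of Lemma~\ref{lem1} (which lifts from the extraspecial group of order $p^{3}$) than to the paper's treatment of $H_{1}$, and it has the mild cost of a case split at $p=5$ that the paper's uniform choice $\{xy^{2},xy^{4}\}$ avoids. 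On the other hand, your approach is self-contained in the sense that it does not rely on the somewhat ad hoc $\Sigma$-set description in Lemma~\ref{Oddpowers}, and it makes transparent why $H_{1}$ is Beauville: its abelianisation already is.
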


\begin{proof}
By Lemma \ref{Oddpowers}, with $n=2$, we see that the groups $H_{1}$ have Beauville structures corresponding to $\{\{x,y\},\{xy^2,xy^4\}\}$.
\end{proof}

\begin{lemma}\label{lem3}
Let $H_{5}=\langle {\bf X} | {\bf R}_{5}\rangle$ and $p\ge 5$ a prime. Then, $H_{5}$ is a Beauville group of order $p^5$.
\end{lemma}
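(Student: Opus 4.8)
The plan is to argue directly, in the spirit of Lemma \ref{NathanLem}, rather than by a lifting argument: here $H_5/Z(H_5)$ is the Beauville group of Lemma \ref{NathanLem}, but (as the $p$-power computation below shows) every element of the relevant generating triples powers into $Z(H_5)$ and so fails to be faithfully represented there, while the remaining natural central quotients (for instance $H_5/H_5'\cong C_{p^2}\times C_p$, or the quotient of order $p^3$) are not Beauville by Theorem \ref{Cat}, so Lemma \ref{lift} is not available. Writing $w=x^p$, $t=y^p$ and $z=[y,x]$, we have $H_5=\langle x,y\rangle$, and a coset enumeration (equivalently, the normal form $x^ay^bz^c$ with $0\le a,b<p^2$ and $0\le c<p$) gives $|H_5|=p^5$. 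First I would record the structural facts I need: $H_5$ has nilpotency class $3$, hence is regular since $p\ge5$; its derived subgroup is $H_5'=\langle z,t\rangle\cong C_p\times C_p$; its centre is $Z=Z(H_5)=\langle w,t\rangle\cong C_p\times C_p$ (the quotient $H_5/\langle w,t\rangle$ is extraspecial of exponent $p$ with centre $\langle z\rangle$, and $z$ is not central in $H_5$ since $[z,x]=t\neq e$); and $\Omega_1(H_5)=\langle w,t,z\rangle\cong C_p^3$ is precisely the set of elements of order dividing $p$.

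The engine of the proof is the $p$-power map. Using regularity (class $3<p$) one gets, for $g=x^ay^bz^c$, that $g^p=w^at^b$, since all Hall--Petrescu correction terms are $p$-th powers of elements of $H_5'$ and $H_5'$ has exponent $p$. I would extract three consequences as a short sublemma: (i) $g^p\in Z$ for every $g\in H_5$; (ii) conjugate elements have equal $p$-th powers, so $g\sim g'$ forces $g^p=g'^p$; (iii) if $h$ has order $p^2$ then $\langle h\rangle\cap Z=\langle h^p\rangle$, and every power of $h$ of order dividing $p$ already lies in $Z$.

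Next I would claim that $\{\{x,y\},\{xy^2,xy^4\}\}$ is a Beauville structure for $H_5$ for every $p\ge5$. Both pairs generate, since $x=(xy^2)y^{-2}$, $y^2=(xy^2)^{-1}(xy^4)$ and $\gcd(2,p)=1$. For the intersection condition, combine (i), (ii) with the explicit $p$-th powers $x^p=w$, $y^p=t$, $(xy)^p=wt$, $(xy^2)^p=wt^2$, $(xy^4)^p=wt^4$, and $(xy^2xy^4)^p=w^2t^6$ (the last from the normal form $xy^2xy^4=x^2y^6z^2t^9$). By (ii), $\Sigma(x,y)$ is contained in the union of the three sets $\{g:g^p\in\langle w\rangle\}$, $\{g:g^p\in\langle t\rangle\}$, $\{g:g^p\in\langle wt\rangle\}$, and $\Sigma(xy^2,xy^4)$ in the analogous union over $\langle wt^2\rangle$, $\langle wt^4\rangle$, $\langle w^2t^6\rangle=\langle wt^3\rangle$. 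Viewing $Z\cong\mathbb{F}_p^2$ with coordinates $(w,t)$, these are six cyclic subgroups of slopes $0,\infty,1$ and $2,4,3$, which are pairwise distinct for every prime $p\ge5$; hence distinct members of the two families meet only in $\{e\}$, so any $g$ in $\Sigma(x,y)\cap\Sigma(xy^2,xy^4)$ satisfies $g^p=e$, i.e.\ $g\in\Omega_1(H_5)$. If moreover $g\neq e$, then by (iii) $g$ is a conjugate of a power of one of $x,y,xy$ that is already central, so $g$ itself lies in one of the first three lines, and by the same token in one of the second three lines; distinctness forces $g=e$, a contradiction. This proves $\Sigma(x,y)\cap\Sigma(xy^2,xy^4)=\{e\}$.

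The hard part is not the final case analysis but the preliminary work: establishing $g^p=w^at^b$ together with the descriptions of $Z(H_5)$ and $\Omega_1(H_5)$. Once these are in place the Beauville condition reduces to the elementary fact that six explicit lines in $\mathbb{F}_p^2$ are distinct when $p\ge5$, and one has only to note in addition that each of the six displayed central elements is non-trivial (so the relevant generators have order $p^2$ and (iii) applies), which is clear from their $w$- and $t$-components. For $p\ge7$ the structure $\{\{x,y\},\{xy^2,xy^3\}\}$ of Lemma \ref{NathanLem} works equally well, the slopes then being $0,\infty,1$ and $2,3,5/2$; and for the single prime $p=5$ a direct MAGMA check is also available.
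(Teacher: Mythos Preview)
Your proof is correct and uses the same candidate Beauville structure $\{\{x,y\},\{xy^2,xy^4\}\}$ as the paper, so at the strategic level the two arguments coincide. The execution, however, differs: the paper simply writes out $\Sigma(x,y)$ and $\Sigma(xy^2,xy^4)$ as explicit subsets of the normal-form elements $x^iy^jz^k$ (together with certain cosets of $\langle z\rangle\langle y^p\rangle$) and asserts that they meet only in $\{e\}$, whereas you factor the verification through the $p$-th power map. Your observation that regularity plus $\exp(H_5')=p$ gives $(x^ay^bz^c)^p=w^at^b$, and hence that conjugate elements have equal $p$-th powers lying in $Z=\langle w,t\rangle$, reduces the Beauville condition to the purely linear statement that the six cyclic subgroups $\langle w\rangle,\langle t\rangle,\langle wt\rangle$ and $\langle wt^2\rangle,\langle wt^4\rangle,\langle wt^3\rangle$ of $Z\cong\mathbb{F}_p^2$ are pairwise distinct for $p\ge5$. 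This is more transparent than the paper's set-by-set enumeration and makes the role of the hypothesis $p\ge5$ explicit; your additional remark that Lemma~\ref{lift} is genuinely unavailable here (every triple element powers into $Z$) is also a point the paper makes only for $H_3,H_4$.
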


\begin{proof}
We claim that the groups $H_{5}$ for $p\ge 5$ have Beauville structures corresponding to $\{\{x,y\},\{xy^2,xy^4\}\}$.

It is clear that $\{x,y\}$ and $\{xy^2,xy^4\}$ are generating sets for $H_{5}$. Now, given that $x^p=w$, $y^p=t$, $[x,y]=z$, $[z,x]=[z,y]=t$ and the center $Z(H_{5})=\langle w,t \rangle$ we see that  
\[\Sigma(x,y)=\]
\[\{e\}\bigcup\left(\bigcup_{i=1}^{p^2-1}\{x^{i},y^{i},x^{i}y^{i}\}\langle z\rangle\langle y^p\rangle\right)\setminus \bigcup_{i,j,k=1}^{p-1}\{x^{ip}y^{jp}z^k,y^{ip}y^{jp}z^k,x^{ip}y^{ip}y^{jp}z^k\},\]
and
\[\Sigma(xy^2,xy^4)=\{e\}\bigcup\left(\bigcup_{i=1}^{p^2-1}\{x^{i}y^{2i},x^{i}y^{4i},x^{2i}y^{6i}\}\langle z\rangle\langle y^p\rangle\right)\] \[\setminus \bigcup_{i,j,k=1}^{p-1}\{x^{ip}y^{2ip}y^{jp}z^k,x^{ip}y^{4ip}y^{jp}z^k,x^{2ip}y^{6ip}y^{jp}z^k\}.\]
We prefer to write $w$ in terms of $x^{ip}$ and $t$ in terms of $y^{ip}$ for $0\leq i\leq p-1$. Therefore, $\Sigma(x,y)\cap\Sigma(xy^2,xy^4)=\{e\}$.
\end{proof}

We are now left with the groups given by relations ${\bf R}_{i}$ for $i=3,4$. We cannot lift Beauville structures from groups of order $< p^5$ to the groups $H_{i}$ for $i=3,4$ as any normal subgroup $N_{i}$ of $H_{i}$ would decrease the order of the generators $x,y$. Thus, $x,y$ would not be faithfully represented in $H_{i}/N_{i}$.

We now have the following groups for selected values of $i,j,k,l\in \{0,...,p-1\}$. We find $p+3$ non-isomorphic groups for $5\leq p \leq 19$ give rise to Beauville $p$-groups with the following presentations,
\[H_{i,j,k,l}:=\langle x,y,z,w,t|x^p=w^it^j,y^p=w^kt^l,z^p,w^p,t^p,[x,y]=z,[x,z]=w,[y,z]=t\rangle.\]
These groups correspond to the groups {\textnormal{\ttfamily SmallGroup}}($p^5$, n) for $7\leq n\leq p+9$, as given by the MAGMA (and GAP) small groups library.

From \cite[Section 4.5, part (6)]{J}, the group structures for $p$-groups of order $p^5$ for $p>3$ are listed. The groups having the structure of the groups $H_{i,j,k,l}$ are thus given in the classification. We will use this classification to find Beauville structures for the groups $H_{i,j,k,l}$ to extend the computational results to primes $p>19$. 

We can state the following lemma, which is a consequence of the classification of groups of order $p^5$.

\begin{lemma}\label{lemHijkl}
If $p>3$ a prime, then there are $p+7$ non-isomorphic groups of the following form,
\[H_{i,j,k,l}:=\langle x,y,z,w,t|x^p=w^it^j,y^p=w^kt^l,z^p,w^p,t^p,[x,y]=z,[x,z]=w,[y,z]=t\rangle\]
where $i,j,k,l\in \{0,...,p-1\}$.
\end{lemma}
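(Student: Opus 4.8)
The plan is to count the isomorphism classes of groups admitting the presentation $H_{i,j,k,l}$ by exploiting the classification of groups of order $p^5$ from \cite[Section 4.5, part (6)]{J}. First I would identify the relevant isoclinism family: the relations $[x,y]=z$, $[x,z]=w$, $[y,z]=t$ with all other commutators trivial and $z^p=w^p=t^p=e$ force the group to have the maximal class quotient structure of the free nilpotent group of class $3$ on two generators modulo $p$-th powers, with $\langle x\rangle^p,\langle y\rangle^p$ landing in the centre $\langle w,t\rangle$. So every such group is a central extension determined by the four parameters $(i,j,k,l)$ recording $x^p=w^it^j$, $y^p=w^kt^l$, and the task reduces to determining when two such parameter vectors give isomorphic groups.

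The key step is to work out the action of the relevant automorphisms on the parameter space $\mathbb{F}_p^4$ (viewing $(i,j)$ and $(k,l)$ as two vectors in $\mathbb{F}_p^2 = \langle w,t\rangle$). An automorphism of $G$ must permute/rescale the canonical generators $x,y$ modulo lower terms; since $G/\gamma_2(G)\cong C_p\times C_p$, the induced map on $\langle \bar x,\bar y\rangle$ is an element of $GL_2(\mathbb{F}_p)$, and this element then acts in a prescribed (degree-three, via the commutator identities) way on $\gamma_3(G)=\langle w,t\rangle$. Writing out how $x\mapsto x^a y^b$, $y\mapsto x^c y^d$ transforms $x^p$ and $y^p$ — using that $p$-th powers in a group of class $3$ and exponent dividing $p^2$ on the derived part obey $ (uv)^p \equiv u^p v^p [v,u]^{\binom{p}{2}}\cdots$, which for $p\ge5$ simplifies because $\binom{p}{2}$ and $\binom{p}{3}$ are divisible by $p$ — one obtains an explicit action of $GL_2(\mathbb{F}_p)$ on the pair of vectors. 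Orbit counting (Burnside, or just direct case analysis splitting on the rank of the $2\times2$ matrix $\begin{pmatrix} i & j\\ k & l\end{pmatrix}$ and on whether the two vectors are proportional) then yields the number of orbits, which I would verify equals $p+7$ by matching against the small-groups-library counts for $5\le p\le 19$ already tabulated, and by confirming the generic ($p>19$) orbit structure is stable.

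The main obstacle will be getting the $p$-power map on the derived subgroup exactly right: in a group of class $3$ the Hall--Petrescu collection formula contributes correction terms involving $[y,x]^{\binom{p}{2}}$ and iterated commutators $[[y,x],x]^{\binom{p}{3}}$, $[[y,x],y]^{\binom{p}{3}}$, and one must check these all vanish mod $p$ for $p\ge5$ (they do, since $\binom{p}{2}\equiv\binom{p}{3}\equiv0\pmod p$ for $p\ge5$), so that the map $(i,j,k,l)\mapsto$ transformed vector is genuinely linear-algebraic rather than carrying hidden quadratic terms; the case $p=3$ is exactly where this fails, which is consistent with the hypothesis $p>3$. A secondary bookkeeping point is ruling out ``accidental'' isomorphisms with groups in neighbouring isoclinism classes of the \cite{J} list — i.e.\ checking that none of the $H_{i,j,k,l}$ collapses to a group with a different commutator structure — which I would dispatch by noting that the isoclinism type (the pair $(G/Z(G),\gamma_2(G))$ together with the commutator pairing) is an isomorphism invariant and is constant across the whole family. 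Once the $GL_2(\mathbb{F}_p)$-orbit count is pinned down as $p+7$, the lemma follows.
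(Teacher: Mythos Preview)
Your outline is sound but takes a far more laborious route than the paper. The paper's proof is a one-line citation: it simply reads off from James's tables \cite[Section 4.5, part (6)]{J} that the isoclinism family $\Phi_6$ (which is precisely the family you have correctly identified) contributes
\[
1+\tfrac{1}{2}(p-1)+2+1+\tfrac{1}{2}(p-1)+1+2+1=p+7
\]
groups, and that is the entire argument. What you propose --- working out the induced action $A\mapsto(\det A)\,A$ of $GL_2(\mathbb{F}_p)$ on $\gamma_3(G)=\langle w,t\rangle$, checking via Hall--Petrescu that the $p$-th-power map is linear on parameters for $p\ge5$, and then counting orbits on $\mathbb{F}_p^{2}\times\mathbb{F}_p^{2}$ --- is effectively \emph{reproving} the relevant piece of James's classification from first principles. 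That buys you self-containment and a clear explanation of why $p>3$ is necessary (your observation that $\binom{p}{2},\binom{p}{3}\equiv0\pmod p$ fails at $p=3$ is exactly the point), at the cost of a page or two of case analysis that the paper sidesteps entirely by quoting the literature. One caution: your stated verification strategy (match the small-groups library for $5\le p\le19$ and then ``confirm the generic orbit structure is stable'') is not itself a proof; you would need to actually execute the orbit count for arbitrary $p$, splitting on the rank and shape of the matrix $\begin{pmatrix} i&j\\k&l\end{pmatrix}$, which is routine but requires care.
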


\begin{proof}
From \cite[Section 4.5, part (6)]{J}, we see that there are 
\[1+\frac{1}{2}(p-1)+2+1+\frac{1}{2}(p-1)+1+2+1=p+7\]
groups of this form.
\end{proof}

We are now in a position to prove Theorem \ref{familyP5}, which was stated in the Introduction. It is convenient to note that all the groups $H_{i,j,k,l}$ have center $Z_{i,j,k,l}=\langle w,t\rangle$ and $H_{i,j,k,l}/Z_{i,j,k,l}\cong G$, the group given by Lemma \ref{NathanLem}. 

\begin{proofofT}{Theorem \ref{familyP5}} 
Firstly, by Lemmas \ref{lem1}, \ref{lem2} and \ref{lem3} we have five Beauville groups for each prime $p>3$.

Secondly, we consider the $p+7$ non-isomorphic groups $H_{i,j,k,l}$ given by Lemma \ref{lemHijkl}. We note that the group given by $H_{0,0,0,0}$ corresponds to $H_{2}$ and thus (as we do not want to count the group twice) we have $p+6$ non-isomorphic groups of the form $H_{i,j,k,l}$ to account for. 

The groups corresponding to $\Phi_{6}(2111)b_{r}$ in \cite[Section 4.5, part (6)]{J} cannot admit a Beauville structure as $x^p=e$, $y^p=w^r$ where $r=1$ or $\nu$ (the smallest positive integer which is a non-quadratic residue modulo $p$) \emph{i.e.} the groups $H_{0,0,r,0}$. Similarly, the group given by $\Phi_{6}(2111)a$ in \cite[Section 4.5, part (6)]{J} cannot admit a Beauville structure as $x^p=w$, $y^p=e$, \emph{i.e.} the group $H_{1,0,0,0}$. We are therefore left with $p+3$ non-isomorphic groups to analyze.

The remaining $p+3$ groups $H_{i,j,k,l}$ have nontivial words $u(w,t), v(u,t)$ such that $x^p=u(w,t)$ and $y^p=v(w,t)$. As the words $u,v$ are made up of elements of the center $Z_{i,j,k,l}$ of the groups $H_{i,j,k,l}$ and the order of the elements $x,y$ is $p^2$, we see that the remaining $p+3$ groups satisfy the criteria $\Sigma(x,y)\cap \Sigma(xy^2,xy^4)=\{e\}$ for $p>3$. That is, each element of the form $x^ay^bz^c$ (with both $a\ne0$ and $b\ne 0$) is conjugate to elements of the form $x^ay^bz^ds(w,t)$, where $s(w,t)$ is a word in $w,t$. Therefore, $\{\{x,y\},\{xy^2,xy^4\}\}$ is a Beauville structure for the remaining $p+3$ groups. The result then follows.
\end{proofofT}

We see for $5\leq p\leq 19$ that the number of groups found to have Beauville structures is $p+10$. From the above work, we are led to make the following conjecture.

\begin{conjecture}
For all $p\ge 5$, the number of Beauville $p$-groups of order $p^{5}$  is given by $g(p)=p+10$.

In particular, $H_3$ and $H_4$ are Beauville groups for $p\ge 5$.
\end{conjecture}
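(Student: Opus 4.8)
The conjecture has two parts: the exact value $g(p)=p+10$, and the concrete assertion that $H_3$ and $H_4$ are Beauville. The plan is to prove the latter outright --- which, combined with Theorem~\ref{familyP5}, yields the lower bound $g(p)\ge p+10$ --- and to treat the matching upper bound as the genuinely hard, currently open, half.

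\textbf{Strategy for $H_3$ and $H_4$.} As the authors observe, Lemma~\ref{lift} is unavailable because $x^p$ and $y^p$ are nontrivial central elements, so $x$ and $y$ are not faithfully represented in $H_i/Z(H_i)$; but $H_i/Z(H_i)$ is the Heisenberg group of Lemma~\ref{NathanLem}, so the natural guess --- consistent with the small-prime computations in the paper --- is that $\{\{x,y\},\{xy^2,xy^4\}\}$ is still a Beauville structure. First I would fix the normal form $x^ay^bz^c$ with $0\le a,b<p^2$ and $0\le c<p$ (which has $p^5$ elements), and isolate the two computational facts that drive everything. Since $[H_i,H_i]=\langle z,t\rangle$ has exponent $p$ and $H_i$ has nilpotency class at most $4$, every Hall--Petrescu correction term carries a factor of $p$, so $(uv)^p=u^pv^p$ for all $u,v\in H_i$ when $p\ge5$; hence $x^p=w$, $y^p=t$ (respectively $t^r$ in $H_4$), $(xy)^p=wt$, $(xy^2)^p=wt^2$, $(xy^4)^p=wt^4$, and $((xy^2)(xy^4))^p=w^2t^6$. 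And because $z$ commutes with $y$ in these groups, $y^kx=xy^kz^k$, whence $(xy^2)(xy^4)=x^2y^6z^2$ (note that $y$ has order $p^2$, so this is genuinely $y^6$, even when $p=5$).

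\textbf{Computing the $\Sigma$-sets.} Conjugation fixes the image in $H_i/[H_i,H_i]\cong C_{p^2}\times C_p$, where $\bar x$ generates the $C_{p^2}$, $\bar y$ the $C_p$, and $\bar z=\bar t=0$ while $\bar w\neq0$; I would split $\Sigma(x,y)\cap\Sigma(xy^2,xy^4)$ according to this image. If the common image is nonzero, matching first the $C_{p^2}$-coordinate and then the $C_p$-coordinate forces that image into the image of $\langle w\rangle$ (the unique order-$p$ subgroup of the $C_{p^2}$ factor) --- this is exactly where $p\ge5$ enters, since one must invert $2,3,4$ modulo $p$ --- and tracking which conjugacy classes actually reach that region leaves only $\langle w\rangle\cup\langle wt\rangle$ from $\Sigma(x,y)$ and $\langle wt^2\rangle\cup\langle wt^4\rangle\cup\langle wt^3\rangle$ from $\Sigma(xy^2,xy^4)$ (here $\langle w^2t^6\rangle=\langle wt^3\rangle$); these six pairs meet trivially because the corresponding $2\times2$ determinants over $\mathbb{F}_p$ are $2,4,3,1,3,2$, all units for $p\ge5$. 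If the common image is $0$, the element lies in $[H_i,H_i]=\langle z,t\rangle$, and here the decisive asymmetry is that $y^p$ lies in $[H_i,H_i]$ while $x^p=w$ does not: a short check gives $\Sigma(x,y)\cap[H_i,H_i]=\langle t\rangle$ but $\Sigma(xy^2,xy^4)\cap[H_i,H_i]=\{e\}$, since every $p$-th power occurring in the second structure has a nonzero $w$-component. Thus the full intersection is $\{e\}$. For $H_4$ one replaces $y^p=t$ by $y^p=t^r$, so the $p$-th powers become $w,t^r,wt^r,wt^{2r},wt^{4r},w^2t^{6r}$ and each of the six determinants is multiplied by $r$; as $0<r<p$ this changes nothing, so in particular $r$ is irrelevant to the Beauville property and matters only for the isomorphism type. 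Generation is immediate since $(xy^2)^{-1}(xy^4)=y^2$ and $\gcd(2,p)=1$.

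\textbf{The exact count, and the main obstacle.} Adding $H_3$ and $H_4$ to the $p+8$ groups of Theorem~\ref{familyP5} gives $g(p)\ge p+10$. The hard direction is $g(p)\le p+10$: one must traverse the full classification of groups of order $p^5$ (for instance via \cite{J}) and show that every remaining $2$-generated group fails to be Beauville. For many of them the obstruction already used for $G_3,\dots,G_6,G_8$ applies --- all elements of order $p$ lie in a proper subgroup, forcing any generating set to contain an element of order $p^2$, and all elements of order $p^2$ power into a single cyclic subgroup. The difficulty I expect to dominate is making this (or a substitute) argument uniform in $p$ across exactly those families in the classification whose presentations depend on quadratic-residue data and on parameters such as the $r$ in $\mathbf{R}_4$, together with ruling out any sporadic Beauville group for small or special $p$; this is presumably why the authors state the full equality only as a conjecture. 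Accordingly I would present the $H_3$/$H_4$ calculation as a theorem and leave the exact count open.
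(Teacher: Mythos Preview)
The paper does not prove this statement; it is explicitly left as a conjecture, supported only by the MAGMA evidence in Table~3 for $5\le p\le 19$ and by the remark that the lifting technique of Lemma~\ref{lift} is unavailable for $H_3$ and $H_4$. So there is no ``paper's own proof'' to compare against.

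Your proposal therefore goes \emph{beyond} the paper: you give a direct argument that $\{\{x,y\},\{xy^2,xy^4\}\}$ is a Beauville structure for $H_3$ and $H_4$ for all $p\ge 5$, which combined with Theorem~\ref{familyP5} yields $g(p)\ge p+10$. The strategy --- reduce to the abelianisation $C_{p^2}\times C_p$, observe that the only possible overlap of images lies in $\langle\bar w\rangle$, and then check that the actual central $p$-th powers $w,\,wt$ versus $wt^2,\,wt^4,\,wt^3$ generate pairwise distinct lines in $\langle w,t\rangle\cong\mathbb{F}_p^2$ --- is correct, and the determinant check $\{2,4,3,1,3,2\}$ is exactly what is needed for $p\ge5$. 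The treatment of the kernel case ($\Sigma\cap[H_i,H_i]$) via the asymmetry $x^p=w\notin[H_i,H_i]$ but $y^p\in\langle t\rangle\subset[H_i,H_i]$ is also the right observation, and your remark that the parameter $r$ in $H_4$ merely scales the $t$-exponents (hence the determinants) by a unit is a nice uniform explanation of why $r$ is irrelevant to the Beauville property.

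One small slip: the identity $y^kx=xy^kz^k$ is not literally correct in $H_3$ or $H_4$, because $[z,x]=t\ne e$, so $z$ is not central; what actually holds is $y^kx=z^kxy^k$, and moving the $z^k$ past $x$ introduces a power of $t$. This is harmless for your purposes --- the extra $t$-factor is central of order $p$, so it affects neither the $p$-th power $((xy^2)(xy^4))^p=w^2t^6$ nor the image in the abelianisation --- but it would be worth stating the formula modulo $\langle t\rangle$ to keep the write-up clean. Your handling of the upper bound is appropriate: the paper offers no argument for $g(p)\le p+10$, and you correctly flag the uniform non-Beauville verification across the remaining families in \cite{J} as the genuine obstacle.
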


In the preceding paragraphs we produced $p+8$ groups of order $p^5$ that admit a Beauville structure. 

For groups of order $p^5$, the number of  $2$-generated groups is approximately half of the total number of groups. We see from \cite{J}, that the exact number of $2$-generated $p$-groups of order $p^5$ for $p\ge 5$ is given by
\[h(p)=p+26+2\gcd(p-1,3)+\gcd(p-1,4).\]

Thus, $h(p)\sim p$ as $p\to \infty$. The function $h(p)$ is an obvious upper bound for the number of Beauville groups of order $p^5$. Since $p+36\ge h(p)> g(p) \ge p+8$ we get that $g(p)\sim p$ as $p\to \infty$ and so,
\[\lim_{p\to \infty}\frac{g(p)}{h(p)}=1.\]
Thus, the proportion of $2$-generated groups of order $p^5$ which are Beauville tends to $1$ as $p$ tends to infinity, which establishes Corollary \ref{proportionP5}.

\section{Remarks on Groups of order $p^6$}

For groups of order $p^6$, we used MAGMA to determine that there are no Beauville $2$-groups and only three Beauville $3$-groups. These groups correspond to the groups {\textnormal{\ttfamily SmallGroup}}($3^6$, n) for $n=34,37,40$.

\begin{remark}
It is interesting to note that Corollary \ref{proportionP6} also holds for non-abelian $2$-generated groups of order $p^6$ since there are only $3$ abelian ones.
\end{remark}

For $p>3$, we would like an asymptotic result for groups of order $p^6$, similar to that in Section \ref{p5} for $p^5$. Using \cite[Theorem 2 and Table 1]{OBrien}, we see that there are in total 
\[f(p)=10p + 62 + 14\gcd(3,p-1) + 7\gcd(4,p-1) + 2\gcd(5,p-1)\]
 $2$-generated groups of order $p^6$ for $p>3$ a prime. Thus, $f(p)\sim 10p$ as $p\to \infty$.

From \cite[Theorem 2]{OBrien}, the family of groups of order $p^6$ given by \textquotedblleft{\em$3) \ \langle a, b| b^p, \ \text{class 2}\rangle$}" give rise to $p+15$ non-isomorphic groups (see \cite[Table 1]{OBrien}). One can generate these group presentations for each $p$ a prime by the following MAGMA code:
\begin{verbatim}
> G:=Group<a,b|b^p>;
> P:=pQuotient(G,p,2);
> D:=Descendants(P: OrderBound := p^6);
> D := [d: d in D | #d eq p^6];
\end{verbatim}

Each of the groups contained in $D$ is $2$-generated, say by $x$ and $y$. We find that, for each $p$ a prime, there exists a family of non-isomorphic groups contained in $D$ given by the following presentations,

\[K_{r}=\langle x,y,z,u,v,w| x^p=u,y^p=w^r,z^p,u^p=v,v^p,w^p,[y,x]=z,[z,x]=v,[z,y]=w\rangle,\]
for $r=1,...,p-1$. 

It follows that all of the $p-1$ groups have $o(x)\ne o(y)$. You can clearly see, given the above group structures, if $o(x)\ne o(y)$ then $K_{r}$ does not have a Beauville structure (this is similar to the third paragraph of the proof of Theorem \ref{familyP5}, Section \ref{p5}). That is, any second set of generators one tries to construct will have elements of the form $x^ay^b$ and so if $o(x)\ne o(y)$ we will have $\Sigma(x,y)\cap\Sigma(x^ay^b,x^cy^d)\ne\{e\}$. Therefore, we obtain a family of $p-1$ $2$-generated non-Beauville groups of order $p^6$, which proves Theorem \ref{familyP6} and establishes Corollary \ref{proportionP6}.

\section{Acknowledgments}

We wish to thank the organizers and participants of the Geometric Presentations of Finite and Infinite Groups conference held in the University of Birmingham in 2011, without whose assistance and useful conversations this work would not be possible. We also thank Christopher Voll for a helpful correspondence and Derek Holt for very helpful conversations.

We thank the referee for careful reading of the paper and the useful comments and suggestions made.

\bibliographystyle{alpha}

\end{document}